\documentclass[english]{article}

\usepackage{hyperref}
\usepackage[T1]{fontenc}
\usepackage{lmodern}
\usepackage[left,modulo]{lineno}
\usepackage{float}
\usepackage{babel}
\usepackage{geometry}
\usepackage{mathtools}
\usepackage{amsmath}
\usepackage{amsthm}
\usepackage{amssymb}
\usepackage{graphicx}
\usepackage{setspace}
\usepackage{xcolor}
\usepackage[square,numbers]{natbib}
\usepackage{enumitem}

\makeatletter

\theoremstyle{plain}
\newtheorem{thm}{\protect\theoremname}
\theoremstyle{remark}
\newtheorem{rem}[thm]{\protect\remarkname}

\linenumbers

\mathtoolsset{showonlyrefs,showmanualtags}	

\renewcommand{\o}{\mathrm{ol}}
\newcommand\Exp{\mathbb{E}}

\newcommand\R{\mathbb{R}}

\renewcommand{\L}{K} 
\newcommand{\K}{L} 
\newcommand{\LL}{\L^{\o}}
\newcommand{\KK}{L^\o} 
\newcommand{\E}{S} 
\renewcommand{\S}{P} 
\renewcommand{\P}{\varSigma} 
\newcommand{\drift}{f}
\newcommand{\diff}{G}
\newcommand{\odrift}{g}
\newcommand{\odiff}{H}
\newcommand{\s}{s} 
\newcommand{\x}{\delta x} 
\newcommand{\uu}{\delta u} 
\newcommand{\y}{\delta y} 
\newcommand{\hx}{\delta \hat{x}} 
\renewcommand{\lq}{\mathrm{lq}}
\newcommand{\JJ}{\mathcal{J}^*_{app}}

\renewcommand{\bf}[1]{ \boldsymbol{#1}}

\makeatother

\providecommand{\remarkname}{Remark}
\providecommand{\theoremname}{Theorem}

\begin{document}



\nolinenumbers


\title{ Stochastic Optimal Feedforward-Feedback Control \\ for Partially Observable Sensorimotor Systems}

\author
{Bastien Berret$^{1}$ and Fr\'{e}d\'{e}ric Jean$^{2}$\\
\\
\normalsize{$^1$Universit\'{e} Paris-Saclay, Inria, CIAMS, 91190 Gif-sur-Yvette, France.}\\
\normalsize{$^2$UMA, ENSTA, Institut Polytechnique de Paris, 91120 Palaiseau, France.}\\
\normalsize{E-mails: bastien.berret@universite-paris-saclay.fr; frederic.jean@ensta.fr}\\}

\date{}

\maketitle













\begin{abstract} 
Robust control of complex engineered and biological systems hinges on the integration of feedforward and feedback mechanisms. This is exemplified in neural motor control, where feedforward muscle co-contraction complements sensory-driven feedback corrections to ensure stable behaviors. However, deriving a general continuous-time framework to determine such optimal control policies for partially observable, stochastic, nonlinear, and high-dimensional systems remains a formidable computational challenge. Here, we introduce a framework that extends neighboring optimal control by enabling the feedforward plan to explicitly account for feedback uncertainties and latencies. Using statistical linearization, we transform the stochastic problem into an approximately equivalent deterministic optimization within a tractable, augmented state space that retains critical nonlinearities, offering both mechanistic interpretability and theoretical guarantees on approximation fidelity. We apply this framework to human neuromechanics, demonstrating that muscle co-contraction emerges as an optimal adaptation to task demands, given the characteristics of our sensorimotor system. Our results provide a computational foundation for neuromotor control and a generalizable tool for the control of nonlinear stochastic systems.
\end{abstract}


\section*{Introduction}

Achieving robust behavior in the face of delays and uncertainties is a fundamental challenge for both engineered and biological systems. In human sensorimotor control, systemic latencies (>50 ms) render pure feedback control insufficient for tasks such as drilling a hole \citep{Leib2023}. Consequently, the central nervous system (CNS) employs anticipatory muscle co-contraction \citep{Phan2020}---a ubiquitous strategy observed across tasks from tool use to locomotion \citep{Voloshina2013}. By exploiting the nonlinear, activation-dependent viscoelasticity of muscles \citep{Hogan1984a,Burdet2013}, the CNS regulates mechanical impedance \citep{Burdet2001,Franklin2011,Berret2024}, thereby mitigating the consequences of sensory noise and delays on state feedback control \citep{Franklin2007,Scott2012}.

While human sensorimotor control highlights the critical coupling of feedforward and feedback mechanisms, a theoretical framework to optimally combine them remains elusive. Specifically, current mathematical tools struggle to address the nonlinear, high-dimensional, and partially observable nature of systems subject to
sensorimotor latencies. Solving these stochastic optimal control problems in continuous time is notoriously difficult beyond the standard Linear-Quadratic-Gaussian (LQG) case, which is a dominant yet limiting modeling framework for the neural control of movement \citep{Todorov2002}. Classical formulations describe the optimal solution through infinite-dimensional, computationally intractable equations in belief space \citep{Zakai1969,FlemingPardoux1982,bensoussan1992stochastic}. This has motivated the development of approximate numerical methods, most notably trajectory optimization approaches like Differential Dynamic Programming (DDP) \citep{JacobsonMayne1970} and its stochastic variants such as iterative LQG (iLQG) \citep{Todorov2005b, Li2007} and stochastic DDP \citep{Theodorou2010a}. These algorithms iteratively approximate dynamics and quadratize costs to generate locally optimal feedback control policies, yet they often struggle to seamlessly integrate state estimation with planning under uncertainty. To address partial observability, belief-space extensions like B-DDP \citep{Platt2010, VandenBerg2012} incorporate uncertainty propagation via approximate filters, though often at the cost of high computational complexity. Other paradigms, such as Stochastic Model Predictive Control \citep{Mesbah2018} or sampling-based path-integral frameworks \citep{Todorov2009, Theodorou2010, Williams2017}, offer alternative ways to handle uncertainty. However, like modern deep reinforcement learning \citep{Okada2020, FanMing2021, Wang2023}, these methods prioritize empirical scalability over the interpretability and theoretical guarantees essential for modeling complex neuromechanical systems.

To address these limitations, we introduce a nonlinear stochastic optimal control framework for partially observable systems that leverages efficient numerical tools while preserving interpretability and approximation guarantees. Our approach falls within the category of trajectory optimization and, similar to iLQG and B-DDP, utilizes Kalman-type filters for uncertainty propagation. However, we depart from traditional neighboring optimal control---which designs feedback only after planning a nominal deterministic path \citep{Bryson1969, Athans1971}---by accounting for stochasticity \emph{during} the planning phase \citep{Berret2020b, Berret2021}. By restricting the control policy to an affine dependence on the state estimate, we merge feedforward and feedback components into a unified dynamic optimization problem. We demonstrate that through statistical linearization \citep{Berret2020, Leparoux2024}, we can reformulate this challenging stochastic problem as a deterministic optimal control problem in a tractable, augmented space, while preserving the critical nonlinear properties and covariance controllability essential for robust motor control. The resulting problem is solvable using state-of-the-art tools for deterministic trajectory optimization. Application of this framework to human sensorimotor control illuminates the fundamental interdependence between muscle co-contraction and state feedback control.

\section*{Results}

\subsection*{Partially observable stochastic optimal control problem}
\label{se:soc}

\paragraph*{Problem formulation and hypotheses}
Let $x_t \in \R^n$, the state of a system at time $t$,  be described by an It\^{o} process $x=\{ x_t \}$ solution of a stochastic control system
\begin{equation} \label{eq:sde}
    dx_t = \drift(t,x_t,u_t) dt + \diff(t,x_t,u_t) dw_t, \quad x_0=\xi,
\end{equation}
and the observation at time $t$, $y_t \in \R^p$, be described by an It\^{o} process $y=\{ y_t \}$ of the form
\begin{equation} \label{eq:sde_obs}
    dy_t = \odrift(t,x_t) dt + \odiff(t,x_t) dv_t, \quad y_0=0,
\end{equation}
where $w = \{ w_t \}$, $v = \{ v_t \}$ are two independent Wiener processes of dimensions $k \le n$ and $p$ respectively, and $\xi$ is a given random variable of dimension $n$ (independent of $w_t,v_t$). A control process $u=\{u_t\}$ (where $u_t \in \R^m$) is called \emph{admissible} if it is $\{\mathcal{Y}_t\}_{t\ge 0}$-adapted, where $\mathcal{Y}_t$ is the $\sigma$-algebra generated by $\{y_s,  0\le s \le t \}$. The  general stochastic optimal control problem under consideration is to find an admissible control which minimizes the cost functional
\begin{equation}
    \mathbb{J}(u) = \Exp [J(u)], \qquad \hbox{with} \quad  J(u)= \psi(x_T) + \int_0^{T} \ell(t,x_t,u_t) dt,
\end{equation}
for a fixed or free time horizon $T>0$, possibly subject to some additional constraints in expectation or in probability on the state.

Except for the LQG case, such a problem is extremely difficult to analyze and solve numerically. It is then natural to look for suboptimal solutions by choosing a restricted class of admissible controls for which the problem is easier to handle.
In this paper, we consider a context characterized by the following points:
\begin{itemize}
  \item Movement execution is preceded by a planning phase that allows some parameters of the control policy to be determined off-line, from prior experience and knowledge;
  \item Parameter uncertainties and random disturbances are rather small and approximately Gaussian (no multimodal distributions);
  \item Observation is partial and possibly of poor quality due to systemic noise and delays (see below).
\end{itemize}
As a consequence, feedforward (open-loop, deterministic) controls and average behaviors play a central role, and we can rely on Kalman-type estimators. Inspired by the concept of \emph{neighboring-optimal stochastic control} (see \cite{stengel1986optimal,Bryson1969,Athans1971,maybeck1982stochastic} with different terminologies),
we propose to consider admissible feedback controls of the form described below.

Fixing $T>0$ for simplicity (see Remark\,\ref{re:extensions} regarding free time), we denote $\mathcal{L}^2 := \mathcal{L}^2([0,T],\R^m)$ and $\mathcal{L}_M^2 := \mathcal{L}^2([0,T],M_{m,n}(\R))$. For an open-loop control $u^\o \in \mathcal{L}^2$ and an open-loop gain $\K \in \mathcal{L}_M^2$, we define the control process $u = \pi (u^\o, \K)$ by
\begin{equation}
u_t  = \pi (u^\o, \K)(t) := u^\o(t) + \K(t) \left( \hat{x}_t - m^\o(t)\right), \quad t \in [0,T],
\end{equation}
where:
\begin{itemize}
  \item $m^\o(\cdot)$ is the trajectory of the deterministic part of the system, i.e., the solution of the differential equation
  \begin{equation}\label{eq:moyenne}
  \dot m^\o (t) = \drift(t,m^\o(t),u^\o(t)), \quad m^\o(0)=\Exp [\xi];
  \end{equation}
  \item $\hat{x}$ is the linearized Kalman filter around $(m^\o(\cdot),u^\o(\cdot))$ (see formula~\eqref{eq:LKF} below).
\end{itemize}
It is worth noting that the equations defining $m^\o$ and $\hat{x}$ depend only on  $u^\o$ and $\K$, which justifies the notation $u = \pi (u^\o, \K)$. Moreover, the same affine structure for the control policy $\pi$ arises in related methods such as iLQG and DDP \citep{Li2007,Theodorou2010a}.
Finally, we define the restricted class $\mathcal{U}$ of admissible stochastic controls as
\begin{equation}
\mathcal{U} = \left\{ \pi (u^\o, \K) \ : \ u^\o  \in \mathcal{L}^2 \ \mathrm{and} \ \K \in \mathcal{L}_M^2 \right\}.
\end{equation}

Therefore, we tackle in this paper the following restricted stochastic optimal control problem:
\begin{equation}
  \min_{u \in \mathcal{U}} \mathbb{J}(u) = \min_{ \begin{subarray}{l} u^\o \in \mathcal{L}^2 \\ \K \in \mathcal{L}_M^2 \end{subarray} } \mathcal{J}(u^\o, \K), \qquad \hbox{where } \mathcal{J}(u^\o,\K)= \mathbb{J} \left( \pi (u^\o, \K) \right).
\end{equation}
The left-hand expression is a stochastic optimal control problem while the right-hand one is a dynamic optimization problem in $\mathcal{L}^2$ spaces, which may still be difficult to solve, however. Let us show that the latter can be approximated by a deterministic optimal control problem in finite dimension, for which powerful numerical and theoretical tools exist \citep{betts2020}. 

\begin{rem}
Let us recall the definition of the linearized Kalman filter. Given an open-loop control $u^\o (\cdot) \in \mathcal{L}^2$ and the associated solution $m^\o(\cdot)$ of~\eqref{eq:moyenne}, we define the matrices of the linearized system around $(m^\o(\cdot),u^\o(\cdot))$ in the following way:
\begin{align}
A(t) := \frac{\partial \drift}{\partial x}(t,m^\o(t),u^\o(t)), \quad B(t) &:= \frac{\partial \drift}{\partial u}(t,m^\o(t),u^\o(t)), \quad C(t) := \frac{\partial \odrift}{\partial x}(t,m^\o(t)), \\
 G(t):= \diff(t,m^\o(t),u^\o(t)), & \quad H(t) := \odiff(t,m^\o(t)).
\end{align}
Then the linearized Kalman filter around $(m^\o(\cdot),u^\o(\cdot))$ is the controlled stochastic process $\hat{x}=\{\hat{x}_t\}$ defined by
\begin{equation}\label{eq:LKF}
d \hat{x}_t = \drift(t,\hat{x}_t,u_t) dt + \L(t) (dy_t - \odrift(t,\hat{x}_t) dt), \quad \hat{x}_0=\Exp[\xi],
\end{equation}
where the matrix-valued function $\L(\cdot)=\LL(\cdot)$ (which implicitly depends on $u^\o$) is defined by
\begin{align}\label{eq:LKFgain}
  & \L(t) = \S(t) C(t)^\top (H(t)H(t)^\top)^{-1}  \qquad \hbox{where $\S(\cdot)$ is the solution of} \\
  & \dot \S = A \S + \S A^\top - \S C^\top (H H^\top)^{-1} C \S + G G^\top, \quad \S(0) = \mathrm{cov}(\xi), \label{eq:ricc_sigma}
\end{align}
\noindent with $\mathrm{cov}(\xi)$ the covariance matrix $\Exp \left[ (\xi - \Exp[\xi])(\xi - \Exp[\xi])^\top \right]$ of the random vector $\xi$.
\end{rem}

\paragraph*{Approximate solution via statistical linearization}
We assume that the mappings $f$, $g$, $G$ and $H$ are smooth enough (at least $C^1$) and satisfy the standard conditions guaranteeing the existence of strong solutions to the stochastic differential equations \eqref{eq:sde} and \eqref{eq:sde_obs}. Moreover, for every $t$ and $x$, we assume that the $(p \times p)$-matrix $\odiff(t,x)$ is invertible. At last, we assume that the cost satisfies the following hypothesis.
\begin{itemize}
  \item[(H)] The cost $J$ is quadratic (non necessarily homogeneous) in $x$ and $u$, namely,
\begin{equation}
\ell(t,x,u) = u^\top R(t) u + x^\top Q(t) x + q(t)^\top x + r(t)^\top u \quad \hbox{and} \quad \psi (x) = x^\top Q_\psi x  + q_\psi^\top x,
\end{equation}
where $R, Q, r, q$ are continuous mappings, and, for every $t \in [0,T]$, $R(t)$ is a positive definite $(m \times m)$-matrix, $Q(t)$ and $Q_\psi$ are semi-definite positive $(n\times n)$-matrices, $q(t), q_\psi$ belong to $\R^n$, and $r(t)$ to $\R^m$.
\end{itemize}

We will use an approximation given by the \emph{statistical linearization} (see \cite{Berret2020}) to derive a tractable yet approximately equivalent deterministic optimal control problem (see Theorem \ref{th:main} below). Let us recall what this means, adopting bold symbols to denote variables within the statistical linearization stage. Consider a process $\bf{x} = \{ \bf{x}_t \}$ solution of a stochastic dynamical systems of the form
\begin{equation}
d\bf{x}_{t}=\bf{f}(t, \bf{x}_{t},\bf{u}(t))\,dt+ \bf{G}(t,\bf{x}_{t},\bf{u}(t))\,d\bf{w}_{t},
\end{equation}
where $\bf{u}(\cdot)$ is an \textit{open-loop} control. The statistical linearization approach consists in approximating the mean and covariance of $\bf{x}_t$ by $\bf{m}(t)$ and $\bf{P}(t)$ respectively, which are the solutions of
\begin{equation}
\begin{cases}
\dot{\bf{m}}(t) & =\bf{f}(t,\bf{m}(t),\bf{u}(t)),\\
\dot{\bf{P}}(t) & = \frac{\partial\bf{f}}{\partial\bf{x}}(t,\bf{m}(t),\bf{u}(t))\bf{P}(t) + \bf{P}(t)\frac{\partial\bf{f}}{\partial\bf{x}}(t,\bf{m}(t),\bf{u}(t))^{\top} + \bf{G}(t,\bf{m}(t),\bf{u}(t)) \bf{G}(t,\bf{m}(t),\bf{u}(t))^{\top},
\end{cases}
\end{equation}
with $\bf{m}(0)=\Exp[\bf{x}_0]$ and $\bf{P}(0)=\mathrm{cov}(\bf{x}_0)$.


\begin{thm} \label{th:main}
For $u^\o  \in \mathcal{L}^2$, set
\begin{equation}
 \JJ(u^\o) = \psi(m^\o(T)) +  \mathrm{tr} ( Q_\psi \S(T)) + \int_0^T \left( \ell(t,m^\o,u^\o) +  \mathrm{tr} \left(  Q \S - \S C^\top (HH^\top)^{-1} C \S \E \right) \right) dt ,
\end{equation}
where  $(m^\o,\S,\E)(\cdot)$ is the trajectory of the following control system associated with $u^\o(\cdot)$:
\begin{equation}
\left\{
\begin{array}{l}
\dot m^\o  = \drift(t,m^\o,u^\o), \quad m^\o(0)=\Exp [\xi], \\
\dot \S  = A \S + \S A^\top - \S C^\top (H H^\top)^{-1} C \S + G G^\top, \quad \S(0) = \mathrm{cov}(\xi), \\
\dot \E  = - A^\top \E - \E A - \E BR^{-1}B^\top \E + Q, \quad \E(T) = -Q_\psi .
\end{array}
\right.
\end{equation}
Then, through statistical linearization of the process $ \bf{x} = (x,\hat{x}) $, the stochastic optimal control problem $\min_{u \in \mathcal{U}} \mathbb{J}(u)$ is approximated by the deterministic optimal control problem $\min_{u^\o  \in \mathcal{L}^2} \JJ (u^\o)$. Moreover, for any minimum ${u}^\o$ of $\JJ$, the policy $u=\pi({u}^\o,\KK)$ is a near-optimal control process for $\mathbb{J}$, where $\KK$ is the matrix-valued function associated with ${u}^\o$ defined by $\KK(t) = R(t)^{-1} B(t)^\top \E(t)$.
\end{thm}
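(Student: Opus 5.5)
The plan is to apply statistical linearization to the augmented process $\bf{x}=(x,\hat{x})$ driven by the policy $u=\pi(u^\o,\K)$. I then compute the approximate cost as a deterministic functional of $(u^\o,\K)$. Minimizing that functional in $\K$ explicitly, for fixed $u^\o$, should leave $\JJ(u^\o)$.

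\emph{Step 1: the augmented SDE.} Substituting $u_t=u^\o+\K(\hat{x}_t-m^\o)$ and $dy_t=\odrift(t,x_t)dt+\odiff(t,x_t)dv_t$ into \eqref{eq:sde} and \eqref{eq:LKF} gives a closed SDE for $(x,\hat{x})$. Its drift is
\begin{equation}
\big(\drift(t,x,u),\ \drift(t,\hat{x},u)+\L(\odrift(t,x)-\odrift(t,\hat{x}))\big),
\end{equation}
and its diffusion is $\mathrm{diag}(\diff,\L\odiff)$ acting on $(w,v)$. Its deterministic part, started at $(\Exp\xi,\Exp\xi)$, is $(m^\o,m^\o)$, because the feedback term vanishes when $\hat{x}=m^\o$.

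I linearize around this mean with $A,B,C,G,H$. It is convenient to change variables to $e=x-\hat{x}$ and $\hx=\hat{x}-m^\o$. The linearized drift is then block triangular: $\dot e=(A-\L C)e$ and $\dot{\hx}=(A+B\K)\hx+\L Ce$. The diffusion contributes $GG^\top+\L HH^\top\L^\top$ to the $e$-block, $\L HH^\top\L^\top$ to the $\hx$-block, and $-\L HH^\top\L^\top$ to the cross term. With $\L=\LL$, the $e$-covariance equation coincides with \eqref{eq:ricc_sigma}, so $\mathrm{cov}(e)=\S$. The standard orthogonality computation then gives $\mathrm{cov}(e,\hx)=0$ for all $t$, since the cross derivative reduces to $\S C^\top\L^\top-\L HH^\top\L^\top=0$. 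Finally, $X:=\mathrm{cov}(\hx)$ satisfies
\begin{equation}
\dot X=(A+B\K)X+X(A+B\K)^\top+\L HH^\top\L^\top,\qquad X(0)=0 .
\end{equation}

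\emph{Step 2: the cost.} Under (H) the expected cost depends only on means and covariances. Since $x-m^\o=e+\hx$ with $e\perp\hx$ and $u-u^\o=\K\hx$, I obtain
\begin{equation}
\psi(m^\o(T))+\mathrm{tr}(Q_\psi(\S+X)(T))+\int_0^T\Big(\ell(t,m^\o,u^\o)+\mathrm{tr}(Q(\S+X))+\mathrm{tr}(\K^\top R\K X)\Big)dt .
\end{equation}
For fixed $u^\o$, this is an LQ-type covariance steering problem in $X$ with control $\K$.

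\emph{Step 3: optimizing $\K$.} This is the main obstacle. I would avoid a general minimum principle and instead use a completion-of-squares identity with the terminal-valued Riccati solution $\E$, where $\E(T)=-Q_\psi$. Differentiating $\mathrm{tr}(\E X)$ along the $X$-dynamics and the $\E$-equation gives
\begin{equation}
\frac{d}{dt}\mathrm{tr}(\E X)=\mathrm{tr}(QX)+\mathrm{tr}(\E\L HH^\top\L^\top)-\mathrm{tr}\big((\K-R^{-1}B^\top\E)^\top R(\K-R^{-1}B^\top\E)X\big)+\mathrm{tr}(\K^\top R\K X).
\end{equation}
The sign conventions in the $\E$-equation must be checked so that these terms cancel in this way. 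Integrating from $0$ to $T$, and using $X(0)=0$ and $\E(T)=-Q_\psi$, turns the $X$-dependent part of the cost into
\begin{equation}
-\int_0^T\mathrm{tr}(\E\L HH^\top\L^\top)\,dt+\int_0^T\mathrm{tr}\big((\K-\KK)^\top R(\K-\KK)X\big)dt .
\end{equation}
Since $R>0$ and $X\succeq0$, the second integral is nonnegative and vanishes at $\K=\KK=R^{-1}B^\top\E$. Moreover $\L HH^\top\L^\top=\S C^\top(HH^\top)^{-1}C\S$, so the first integral equals $-\int_0^T\mathrm{tr}(\S C^\top(HH^\top)^{-1}C\S\E)\,dt$, which is exactly the term appearing in $\JJ$.

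\emph{Step 4: conclusion.} Combining the steps, $\min_{\K}$ of the approximate cost equals $\JJ(u^\o)$. Hence the approximate joint minimization over $(u^\o,\K)$ reduces to $\min_{u^\o}\JJ$. Any minimizer $u^\o$, paired with $\KK$, minimizes the statistically linearized cost over $\mathcal{U}$, which is the stated near-optimality.

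Two technical points need checking. First, existence of $\E$ on $[0,T]$: its time reversal $\tilde\E=-\E$ satisfies a standard LQR Riccati equation with positive semidefinite data, so no blow-up occurs. Second, $\L$ is independent of $\K$, so that $\S$ does not depend on the gain; this holds by definition \eqref{eq:LKFgain}.
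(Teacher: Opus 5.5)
Your proposal is correct. Its architecture matches the paper's: statistical linearization of $(x,\hat x)$, the block structure $P_{11}=\P+\S$, $P_{12}=P_{22}=\P$ (your $X$ is the paper's $\P$), the same approximate cost $\mathcal{J}_{app}(u^\o,\K)$, and elimination of $\P$ via $\frac{d}{dt}\mathrm{tr}(\E\P)$. Your error coordinates $e=x-\hat x$, $\hx=\hat x-m^\o$ derive that block structure rather than verifying it after the fact, which makes the projection property explicit as $\mathrm{cov}(e,\hx)=0$.

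The genuine difference is in how you establish the optimality of $\KK$. The paper identifies $\Delta J(u^\o,\K)$ with the LQG cost $\mathbb{J}^{\lq}(\K\hx)$ of the linearized system, using the Kalman--Bucy filter properties. It then invokes the classical LQG result to conclude that $\KK=R^{-1}B^\top\E$ is optimal. Only afterwards does it differentiate $\mathrm{tr}(\E\P)$ at $\K=\KK$ to remove $\P$.

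Your completion-of-squares identity holds for every gain $\K$ and does both jobs at once. The sign check you flagged goes through with the stated $\E$-equation: the cross terms combine exactly into $-\mathrm{tr}((\K-\KK)^\top R(\K-\KK)X)+\mathrm{tr}(\K^\top R\K X)$.

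What your route buys: it is more elementary and self-contained, and it bypasses the rather terse identification $\Delta J=\mathbb{J}^{\lq}(\K\hx)$. It also gives the exact excess cost of any gain, $\int_0^T\mathrm{tr}((\K-\KK)^\top R(\K-\KK)\P_{\K})\,dt$.

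What the paper's route buys: interpretation, and a slightly stronger statement about the linearized problem. It shows $\JJ(u^\o)$ is the deterministic cost plus the optimal LQG cost around $(m^\o,u^\o)$, which is the link to neighboring-optimal control. It also shows that $\KK\hx$ is optimal among all observation-adapted controls of the linearized problem, not just among gains applied to the estimate, which supports restricting to the class $\mathcal{U}$.

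A minor wording point: $\tilde\E=-\E$ is a sign change, not a time reversal. Your conclusion is still right: $-\E$ solves the standard LQR Riccati equation backward from $Q_\psi\succeq 0$, so it exists on $[0,T]$.
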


The proof of Theorem \ref{th:main} is given in Methods. In summary, this theorem allows to replace the study of a partially observable stochastic optimal control problem by that of a deterministic optimal control problem in dimension $n(n+2)$, a far easier task. The accuracy of the approximation can be quantified precisely thanks to the error estimates of the statistical linearization given in \cite[Sect.~3.1]{Leparoux2024}.
Additionally, it is worth mentioning that the method---despite its name---fully preserves the nonlinear nature of the drift $\drift$ and, therefore, the controller's ability to exploit the nonlinear properties of the dynamics to plan optimal actions under uncertainty.

\begin{rem}\label{re:extensions}
The method can be extended beyond the assumptions used above.
\begin{enumerate}[label=(\roman*)]
  \item One of the main hypotheses of our framework is that the cost is quadratic (H). This assumption allows us to express $\mathcal{J}(u^\o,\K)$ solely as a function of the first two moments of $\bf{x}_t$, $\Exp[\bf{x}_t]$ and $\mathrm{cov}(\bf{x}_t)$, and to employ statistical linearization. For non-quadratic costs, one could approximate $\mathcal{J}(u^\o,\K)$ by replacing $\ell$ and $\psi$ with their quadratic expansions along $(m^\o,u^\o)$---similarly to DDP or iLQG---and then apply the remainder of the construction. An approximate solution could then be obtained from the resulting deterministic optimal control problem. However, this heuristic approach does not provide a theoretical justification for the approximation, nor does it yield an error estimate such as that given in \cite{Leparoux2024}. In contrast, our method applies without modification to cost functions composed of quadratic terms in $(x_t, u_t)$ together with non-quadratic terms depending only on $\Exp[x_t]$ and $\Exp[u_t]$. Consequently, more complex cost functions can be implemented to shape the system's average behavior, for example a minimum hand jerk model for a multi-joint arm \citep{Flash1985}.

  \item Another limiting assumption of our framework is that the use of stochastic controls of the form $u=\pi(u^\o,\K)$ requires unbounded controls with values in $\R^m$ whereas in most applications controls are bounded. As explained in~\cite[Sect.~4.3.2]{Leparoux2024}, a first approach to circumvent this limitation consists of inserting saturations on the control directly in the dynamics. In this way, $u$ can be considered as an unbounded input, while the actual control of the system is the saturation of $u$. A second approach is to add constraints using $u^\o$ and $\P$ (defined by~\eqref{eq:P}; see Methods) to restrict deviations from the mean by a given amount of standard deviation. Note that this solution would require adding $\P$ as a variable into the augmented deterministic problem. A third approach is to only set bounds on $u^\o$ and test the controller \textit{a posteriori} by clamping $u_t$ to reflect hard physical/biological constraints. A similar treatment can be applied to the state variable $x$, including if path constraints and not only box constraints, are considered (see \cite{VanWouwe2022} for an example).

\item The statement of Theorem~\ref{th:main} also holds when the original stochastic control problem has a free final time. In that case, the corresponding approximated deterministic optimal control problem is also in free time. However, in that setting, the hypothesis on the cost function must be modified to ensure the existence of solutions to both optimal control problems. In hypothesis (H), we can typically add a positive term $c(t)$ to the running cost $\ell$, with $c(t) \ge \alpha > 0$ for all $t \ge 0$. This framework coincides with the free-time formulations in~\citep{Berret2016}, for example.
\end{enumerate}
\end{rem}

\paragraph*{Modeling sensory feedback delay}
\label{se:delays}
An important motivation for the above-developed framework is modeling the neural control of movement, which is characterized by significant sensory feedback delays. We explain below how to integrate those delays in the framework. Let us assume that the physical state of a system at time $t$ (e.g., position, velocity, muscle activation etc. for the human sensorimotor system) is described by an It\^{o} process $s=\{\s_t\}$, which is the solution to a stochastic control system of a form similar to \eqref{eq:sde}:
\begin{equation} 
    d\s_t = \tilde{\drift}(t,\s_t,u_t) dt + \tilde{\diff}(t,\s_t,u_t) d\tilde{w}_t, \quad \s_0= \tilde{\xi}.
\end{equation}
The sensory signal at time $t$ is a vector $\tilde{\odrift}(\s_t) \in \R^p$ that depends on the physical state of the system. This signal is subject to transmission latency, integration time and neural noise prior to reaching the central controller, resulting in a feedback signal with significant delay and reduced signal-to-noise ratio. We denote this ``integrated'' sensory feedback by $z_t \in \R^p$. The delay introduced by this process depends on the specific sensory modality involved. For instance, proprioceptive feedback is significantly faster than visual feedback in the human sensorimotor system (e.g., 50\,ms versus more than 100\,ms). To model delayed sensory feedback, we thus assume that the process $z=\{z_t\}$ satisfies the following stochastic differential equation
\begin{equation} 
    dz_t = D^{-1} \left(\tilde{\odrift}(\s_t) -z_t \right) dt + \diff^z(t,\s_t) dw^z_t, \quad z_0=\tilde{\odrift}(\s_0),
\end{equation}
where $D$ is the diagonal matrix $\mathrm{diag}(\Delta_1, \dots, \Delta_p)$, and each $\Delta_i>0$ is the typical delay introduced by each sensory-specific integration process. The diffusion term, $\diff^z(t,\s_t) dw^z_t$, accounts for all the internal noise affecting this process.

By further considering that sensory feedback $z_t$ is affected by observation noise, the observed state is modeled as a stochastic process $y=\{y_t\}$ that is a solution to
\begin{equation} 
    dy_t = z_t dt + \odiff(t,\s_t,z_t) dv_t, \quad y_0=0.
\end{equation}
where $\odiff$ specifies the noise covariance for the different sensory modalities.

Defining the extended state $x_t=(\s_t,z_t)$, we therefore obtain a model under the form of a stochastic control system as in \eqref{eq:sde}, where
\begin{align}
 \drift(t,x_t,u_t) = \left(\tilde{\drift}(t,\s_t,u_t),D^{-1} \left(\tilde{\odrift}(\s_t) -z_t \right) \right), &\quad
 \diff(t,x_t,u_t) = \left(
    \begin{array}{cc}
        \tilde{\diff}(t,\s_t,u_t) & 0 \\
            0 & \diff^z(t,\s_t) \\
    \end{array}
\right), \\
 \quad dw_t= \left(d\tilde{w}_t, dw^z_t \right), &\quad \xi = \left( \tilde{\xi},\tilde{\odrift}(\tilde{\xi}) \right),
\end{align}
with an observation equation of the form of \eqref{eq:sde_obs} with $\odrift(t,x_t) = z_t$.

Note that the state estimate $\hat{x}$ needed by the optimal feedback control policy $u$ is eventually obtained from multisensory integration \citep{Ernst2004} and internal forward model predictions \citep{Wolpert1995c}, as implemented by the linearized Kalman filter (see \eqref{eq:LKF}).

\subsection*{Application to human sensorimotor control}


\paragraph*{Stabilization task with two muscles}

We first consider the human forearm stabilization task \citep{Hogan1984a}. The task consists of stabilizing the human forearm in its unstable equilibrium position for 5 seconds by controlling a pair of antagonist muscles with variable viscoelasticity properties. The activations of the flexor and extensor muscles are denoted by $u_1$ and $u_2$ respectively. These muscle activations can vary joint net torque via reciprocal activations ($\propto(u_1-u_2)$) and joint viscoelasticity via simultaneous activations ($\propto(u_1+u_2)$). Proprioceptive feedback and visual feedback about the joint position are assumed to be available to stabilize the system, and we assume they are subject to uncertainties and latencies. Hereafter, the subscripts $p$ and $v$ refer to proprioception and vision respectively. Four parameters are introduced to vary the magnitudes of sensory delay (denoted by $\rho_p$ and $\rho_v$) and sensory noise (denoted by $\eta_p$ and $\eta_v$). The problem is to find the optimal strategy, balancing feedforward and feedback muscle activations, to stabilize this nonlinear, partially observable stochastic system. Here optimality is meant in terms of minimizing the expectation of an integral quadratic cost composed of squared muscle activations and squared task errors. Details are about the simulations and models are given in Methods.

We investigated how the optimal control policy depends on the $\rho$ and $\eta$ parameters, by varying them through the following scaling factors: 0.2x, 0.5x, 2x and 5x where the reference level (1x) corresponds to the following values: $\rho_p=0.05\,s$ and $\rho_v=0.10\,s$;  $\eta_p=5\,^\circ\sqrt{s}$ and $\eta_v=1\,^\circ\sqrt{s}$. In other words, proprioceptive feedback is thus assumed to be faster but less precise than visual feedback.

The evolution of the optimal strategy with respect to $\eta$ and $\rho$ parameters is depicted in Fig.\,\ref{fig:1dof_evolution}. The main observation is that the optimal strategy significantly depends on the level of sensory noise. One can see that the optimal stiffness ($\propto(u_1+u_2)$) increases with $\rho$ values, whereas the feedback gain decreases accordingly (Fig.\,\ref{fig:1dof_evolution}A,B where the mean value of $\K_{1,1}$ is depicted). This shows that, with large sensory noise, the optimal strategy is to correct errors mostly by exploiting the variable mechanical impedance of muscles via their co-contraction. In contrast, with small sensory noise, it is better to rely more on locally optimal state feedback control. To further investigate the optimal strategy, we quantified the amount of absolute net torque used to correct errors during task execution. Interestingly, with large sensory noise, less net torque is produced by muscles (i.e., $\propto(u_1-u_2)$) because muscle co-contraction contributes strongly to the intrinsic system's stability (Fig.\,\ref{fig:1dof_evolution}C). With low sensory noise, it is in contrast optimal to reduce task errors via net torque corrections stemming from reciprocal muscle activations and state feedback control. Finally, the optimal expected cost increases with the amount of sensory noise $\eta$ (Fig.\,\ref{fig:1dof_evolution}D), in particular because muscle co-contraction is a costly---yet optimal---strategy when facing large sensory noise. This illustrates that the proposed framework predicts an optimal balance between feedforward and feedback control to stabilize the system, which fundamentally depends on the quality of state feedback. The same trends were observed when scaling sensory delays via $\rho_p$ and $\rho_v$.

In summary, with large sensory noise or delay, our simulations demonstrate that it is optimal to increase feedforward muscle co-contraction and reduce state feedback dependency.

\begin{center}
\begin{figure}[H]
\centering{}
\centerline{\includegraphics[width=11cm]{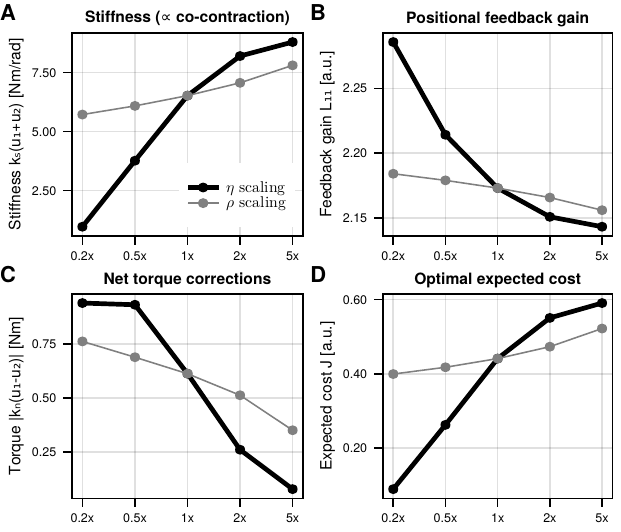}}
\caption{\label{fig:1dof_evolution} Optimal strategy depending on sensory noise and delays. A: Evolution of mean stiffness with respect to variations of sensory noise and delay (average value of $k_s(u_1+u_2)$ across 100 samples). B: Mean positional control feedback gain with respect to variations of sensory noise and delays ($\K_{1,1}(t)$ is depicted). C: Mean absolute net torque computed (average value of $k_n(u_1-u_2)$ across 100 samples). D: Optimal expected cost with respect to variations of sensory noise and delay. Default values for parameters, corresponding to the 1x condition, were as follows: $\rho_p=0.05\,s$, $\rho_v=0.10\,s$, $\eta_p=5.0\,^\circ\sqrt{s}$ and $\eta_p=1.0\,^\circ\sqrt{s}$.}
\end{figure}
\par\end{center}

In Fig.\,\ref{fig:1dof_trajectories}, the corresponding optimal strategies are illustrated for different sensory noise levels. Task performance is depicted in Fig.\,\ref{fig:1dof_trajectories}A,B,C (joint position and velocity) and the optimal motor commands are depicted in Fig.\,\ref{fig:1dof_trajectories}D,E,F (net torque and muscle activations). With low sensory noise (0.2x), stabilization is mostly ensured by state feedback corrections, which generates substantial net torques to correct task errors through reciprocal muscle activations. It can be noted that a small co-contraction is optimal in this condition. With the default sensory noise (1x), a moderate co-contraction becomes the optimal strategy but feedback corrections are still preponderant as revealed by the large net torque fluctuations. This means that the feedforward stiffness is insufficient to effectively stabilize the system but an optimal value exist given the reduced efficiency of state feedback control in this case. For large sensory noise, feedback control becomes mostly irrelevant so that a substantial co-contraction stabilizing the system almost entirely becomes necessary. Indeed, the optimal control policy relies much less on feedback control as revealed by the drastic reduction of state feedback corrections via net torque production.

\begin{center}
\begin{figure}[H]
\centering{}
\centerline{\includegraphics[width=13cm]{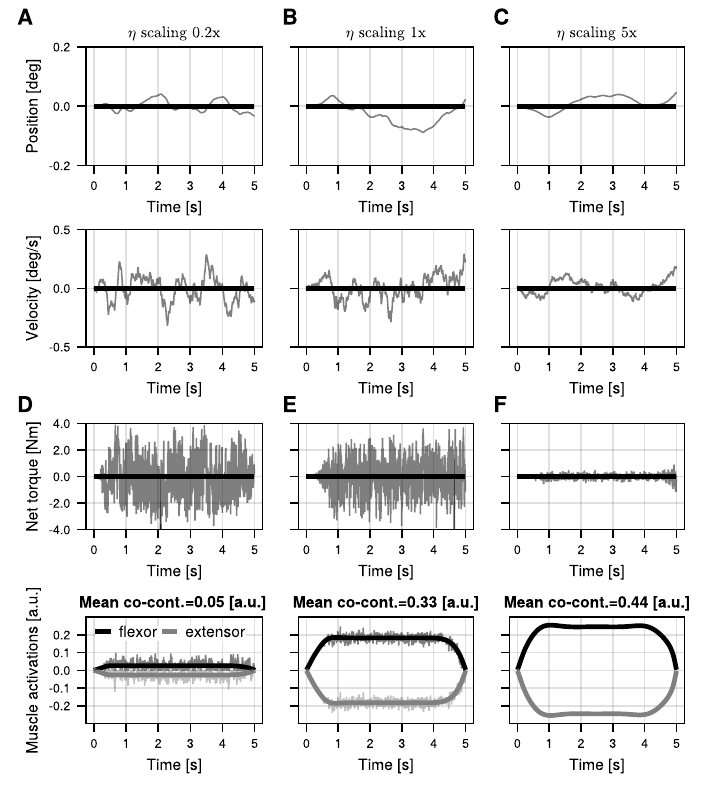}}
\caption{\label{fig:1dof_trajectories} Optimal trajectories for different sensory noise magnitudes. A: Position and velocity traces for a sensory noise of 0.2x the reference level. Thick traces depict the mean behavior and thin traces depict a single trial. B,C: Same information for the different noise magnitudes, 1x and 5x. D: Net torque and muscle inputs for a sensory noise of 0.2x the reference level. Flexor torques are depicted with positive values and extensor torques are depicted with negative values for clarity. E,F: Same information for the different noise magnitudes, 1x and 5x. The mean co-contraction, defined as the mean of $u_{1}+u_{2}$ across the trial, is reported above the graphs.}
\end{figure}
\par\end{center}

\paragraph*{Planar reaching task with six muscles}

Here we simulate planar reaching movements in the forward direction as it has been done in several experimental works \citep{Burdet2001,Franklin2003,Franklin2004,Franklin2007,Franklin2023}. In this task, the endpoint path is approximately straight with a bell-shaped velocity profile, which resembles a minimum jerk trajectory \citep{Flash1985}, while a force field can be applied at the hand level. The human arm is modeled as a two-link arm actuated by 6 muscles with activation-dependent viscoelasticity. This model provides a phenomenological description of variable impedance muscles and is described in \cite{Katayama1993}. For the simulations, the physical state of the system consists of 10 dimensions, comprising the shoulder and elbow joint positions and velocities and the 6 muscle activations (which are subject to a first-order activation dynamics with response time of $0.06\,s$). We assume that sensory signals give information about the shoulder and elbow positions with a delay of $\rho=0.10\,s$, therefore adding 2 dimensions ($z_t$ variable). The augmented state is of dimension $n=12$, which means that the equivalent deterministic optimal control problem is of dimension $n(n+2)=168$. Sensory noise $\eta$ was assumed to be equal to $0.05\,^\circ\sqrt{s}$ when both vision and proprioception are combined (same value for both joints). To simulate movements in the absence of vision, we set $\eta = 5.0\,^\circ\sqrt{s}$, modeling the lack of visual feedback as a significant increase in observation noise. To simulate motor adaptations in a divergent force field (DF), i.e. where muscle co-contraction has been proved to play a critical role, we model an external force field of the form $F_e=(\beta \mathrm{x}, 0)$ (acting along the $\mathrm{x}$-axis in the Cartesian plane) with $\beta = 300 \, N/m$. The null field (NF) condition corresponds to $\beta=0 \, N/m$. Details about the simulations and models are given in Methods.

\begin{center}
\begin{figure}[H]
\centering{}
\centerline{\includegraphics[width=14cm]{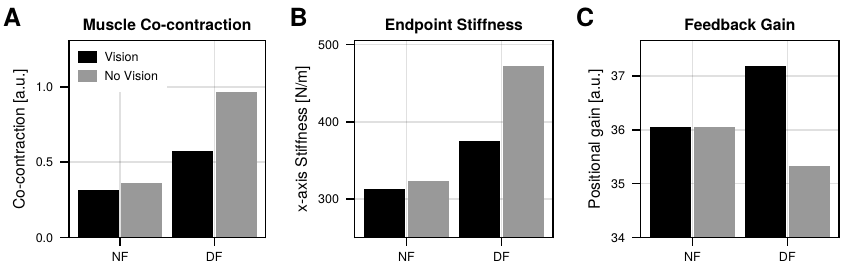}}
\caption{\label{fig:2dof_evolution} Optimal strategy depending on sensory noise and force field magnitudes. A. Optimal muscle co-contraction. Co-contraction was calculated by averaging values across muscle groups, which were organized into three antagonist pairs (shoulder, elbow, and biarticular) and across time. B. Optimal lateral endpoint stiffness. This stiffness along the $\mathrm{x}$-axis was averaged along the trajectory, using the joint stiffness and arm's Jacobian matrix to compute it. C. Optimal feedback gain. Only the mean positional gain averaged across the 6 muscles and 2 joints and across time is depicted. For all variables, grand mean values for the NF and DF conditions are reported, both with and without vision.}
\end{figure}
\par\end{center}

In Fig.\,\ref{fig:2dof_evolution}, we show how the optimal control strategy changes between the NF and the DF conditions, both with and without vision. First, we observe that muscle co-contraction (Fig.\,\ref{fig:2dof_evolution}A) is smaller in NF than in DF, in agreement with empirical findings \citep{Franklin2003}. In NF, co-contraction is marginally larger without vision than with vision \citep{Franklin2007}. In DF, muscle co-contraction is substantially stronger without vision than with vision. Hence, the absence of vision in DF leads to the largest co-contraction level. Similar trends are observed for the mean endpoint stiffness along the lateral direction (Fig.\,\ref{fig:2dof_evolution}B), in agreement with empirical findings \citep{Franklin2007}. Regarding the mean positional feedback gains (Fig.\,\ref{fig:2dof_evolution}C), their values are very similar for both visual conditions in NF. In contrast, the optimal feedback gain in DF is increased with vision but reduced without vision compared to NF level. These results show that the interaction between task dynamics (DF versus NF) and sensory noise (with vision versus without vision) has non-trivial effects on the feedforward-feedback nature of the optimal control policy.

The corresponding optimal trajectories are depicted in Fig.\,\ref{fig:2dof_trajectories}. The endpoint trajectories show that the task is executed relatively well in all conditions (Fig.\,\ref{fig:2dof_trajectories}A). This similar performance is however achieved using a variety of distinct control strategies (Fig.\,\ref{fig:2dof_trajectories}B). With vision (low sensory noise), only a moderate level of co-contraction is observed in the NF condition. Without vision, co-contraction only increases slightly, mainly at the shoulder level. In the DF condition, the task is more challenging because it is unstable. Non-adapted strategies (i.e., using the control policy of NF in DF) lead to diverging trajectories that miss the target (green traces in Fig.\,\ref{fig:2dof_trajectories}A). As a consequence, even when vision is available, more muscle co-contraction is necessary in DF to ensure robust task performance. Notably, the co-contraction of biarticular muscles starts to play an important role in the DF condition, a phenomenon that has been observed experimentally \citep{Franklin2003} (co-contraction at the shoulder also increases but little changes are observed at the elbow level). Interestingly, substantial feedback corrections are still visible in muscle inputs when vision is available. Without vision (or equivalently with large sensory noise) in DF, co-contraction attains its largest level, with some muscle inputs saturating near the end of the movement where the arm is more extended. Remarkably, with such a large muscle co-contraction, very little feedback muscle inputs are visible in single-trial traces. This means that the feedforward control is robust enough so that online motor corrections are not triggered (note that feedback gains are also reduced in this case).

\begin{center}
\begin{figure}[H]
\centering{}
\centerline{\includegraphics[width=13.5cm]{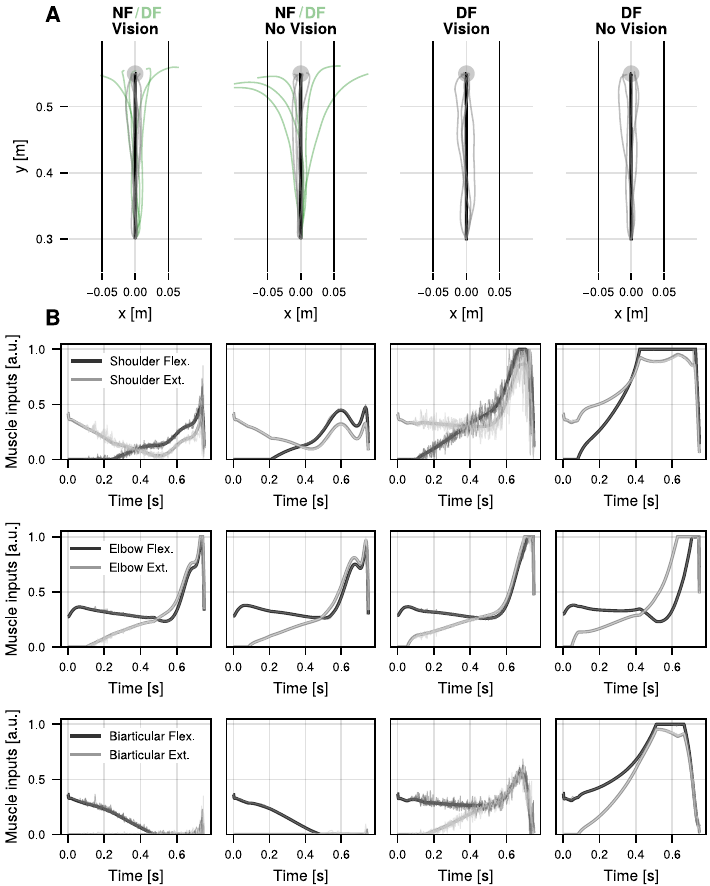}}
\caption{\label{fig:2dof_trajectories} Optimal trajectories for different sensory noise and force field magnitudes. A. Endpoint trajectories (mean and 5 samples depicted) in NF and DF conditions, both with and without vision. Green traces depict trajectories in DF but using the optimal control policy of the NF condition (i.e., a non-adapted scenario). B. Optimal muscle inputs (mean and same 5 samples depicted) for shoulder, elbow and biarticular muscles in the NF and DF conditions, with or without vision.}
\end{figure}
\par\end{center}

\section*{Discussion}

This work introduces a continuous-time framework for solving partially observable nonlinear stochastic optimal control problems in relatively high dimensions. By integrating planning under uncertainty with state estimation and feedback control, the framework enables control policies to exploit nonlinear task dynamics while accounting for sensorimotor uncertainties and latencies. Through statistical linearization, the original stochastic problem is transformed into a tractable deterministic equivalent that preserves the essential structure of the original formulation while significantly reducing computational complexity. Notably, this framework maintains a level of interpretability and delivers theoretical guarantees often absent in contemporary data-driven approaches, and remains solvable with state-of-the-art numerical tools for deterministic optimal control.

When applied to human neuromechanics, our approach provides a principled explanation for how the CNS balances feedforward and state feedback control to ensure stable behaviors. Specifically, the framework naturally predicts the emergence of muscle co-contraction as a necessary complement to high-level feedback control. Our results show that increased sensory noise and latencies trigger a shift of the control strategy toward greater reliance on muscle co-contraction and reduced feedback dependency---a finding that aligns with empirical observations of motor adaptation in uncertain environments \citep{Franklin2003, Franklin2007}. This trade-off is particularly critical in unstable tasks, where the consequences of sensory latency and uncertainty on task performance significantly alter the nature of the optimal control policy. By reconciling the long-standing debate of feedforward impedance control versus state feedback control \citep{Scott2012, Burdet2013}, this framework unifies a vast body of literature---from classical minimum hand jerk or variance models \citep{Flash1985,Harris1998} to modern optimal feedback control \citep{Todorov2002}.

While the current formulation handles smooth nonlinearities, future work should explore how to model tasks involving contacts or discontinuities such as walking. Additionally, while we demonstrate scalability to relatively high-dimensional systems, the augmented state space grows significantly with the inclusion of covariance matrices, potentially challenging the limits of standard solvers for systems with numerous degrees of freedom. Efficient tools for solving this class of problems, exploiting the structure of differential equations and the underlying sparsity of the problem, should be developed. Nonetheless, the framework’s compatibility with inverse optimal control and free-time formulations \citep{Berret2011b, Berret2016} already opens new avenues for identifying the cost functions underlying human movement, moving beyond their traditional focus on mean behavioral trajectories. Critically, our framework is not limited to viscoelasticity modulation; for rigid body dynamics, it can exploit the configuration-dependent inertia matrix during planning, offering a holistic approach to mechanical impedance control that accounts for the presence of feedback. The versatility of the framework further allows considering environmental uncertainty (e.g., parametric randomness) similar to \citep{Berret2024}. By making new testable predictions, this approach will thus advance our understanding of the sensorimotor control strategies employed by the CNS, which universally contend with noise and delays.

Beyond biological movement modeling, the generality of this continuous-time formulation makes it a valuable tool for engineered systems. It offers a theoretical and practical solution for designing robust control strategies in complex nonlinear robotic systems, especially those equipped with variable impedance actuators \citep{Vanderborght2012}. Ultimately, the present stochastic optimal feedforward-feedback control framework provides a unified computational language to study how robust behavior is synthesized in any nonlinear system where both sensing and acting are fundamentally affected by noise or delays.

\section*{Methods}

\paragraph*{Proof of the main theoretical result}
\begin{proof}[Proof of Theorem~\ref{th:main}]%
    Let $u^\o  \in \mathcal{L}^2$, $\K \in \mathcal{L}_M^2$, and $u=\pi (u^\o, \K)$. Since $J(u)$ is quadratic, we can write $\mathcal{J}(u^\o,\K)= \mathbb{J} \left( \pi (u^\o,\K) \right)$ in terms of the mean and covariance of $x_t$ and $\hat{x}_t$. Specifically, denote by $m_x(t)=\Exp[x_t]$, $m_{\hat{x}}(t)=\Exp[\hat{x}_t]$ the mean, and by $\mathrm{cov}(x_t)$, $\mathrm{cov}(\hat{x}_t)$ the covariance matrices of $x_t$ and $\hat{x}_t$ respectively. Then
\begin{multline}
\mathcal{J}(u^\o, \K) = m_x(T)^\top Q_\psi m_x(T)  + q_\psi^\top m_x(T) + \mathrm{tr} \left( Q_\psi \mathrm{cov}(x_T) \right) +
  \int_0^T (u^\o+\K(m_{\hat{x}}-m^\o))^\top R (u^\o+\K(m_{\hat{x}}-m^\o)) \, dt \\
 + \int_0^T \left( \mathrm{tr} \left( \K^\top R \K \mathrm{cov}(\hat{x}_t) \right) + m_x^\top Q m_x + \mathrm{tr} \left( Q \mathrm{cov}(x_t)\right) + q^\top m_x + r^\top u^\o + r^\top \K (m_{\hat{x}}-m^\o) \right) \, dt.
\end{multline}

Let us apply the statistical linearization approach to $\bf{x}=(x,\hat{x})$ associated with the control $u=\pi (u^\o,\K)$, which is solution of the stochastic dynamical system (for the sake of readability we remove when possible the dependence w.r.t.\ time),
\begin{multline}
\left(
  \begin{array}{c}
    dx_t \\
    d \hat{x}_t \\
  \end{array}
\right) =
\left(
  \begin{array}{c}
    \drift(t,x_t,u^\o + \K ( \hat{x}_t - m^\o)) \\
    \drift(t,\hat{x}_t,u^\o + \K ( \hat{x}_t - m^\o)) + \L (\odrift(t,x_t) - \odrift(t,\hat{x}_t)) \\
  \end{array}
\right) dt  \\
+ \left(
  \begin{array}{cc}
    \diff(t,x_t,u^\o + \K ( \hat{x}_t - m^\o)) & 0 \\
    0 & \L \odiff(t,x_t) \\
  \end{array}
\right)
\left(
  \begin{array}{c}
    dw_t \\
    dv_t \\
  \end{array}
\right),
\end{multline}
with $\bf{u}=(u^\o,\K)$ and $\bf{w}_t=(w_t,v_t)$.

Thus we approximate the mean $\Exp[\bf{x}_{t}]=(m_x,m_{\hat{x}})(t)$ by $\bf{m}(t)=(m, \hat{m})(t)$ solution of
\begin{align}
  \dot m & = \drift(t,m,u^\o + \K ( \hat{m} - m^\o)), \qquad m(0)= m_x (0) = \Exp[\xi],\\
  \dot{\hat{m}} & = \drift(t,\hat{m},u^\o + \K ( \hat{m} - m^\o)) + \L (\odrift(t,m) - \odrift(t,\hat{m})) , \qquad \hat{m}(0)= m_{\hat{x}} (0) = \Exp[\xi].
\end{align}
From \eqref{eq:moyenne} we immediately deduce $m(t)= \hat{m}(t)=m^\o (t)$ for every $t\in [0,T]$. Therefore,
\begin{equation}
 \mathrm{cov}(\bf{x}_t) =
\left(
 \begin{array}{cc}
     \mathrm{cov}(x_t) &  \mathrm{cov}(x_t,\hat{x}_t) \\
     \mathrm{cov}(\hat{x}_t,x_t) &  \mathrm{cov}(\hat{x}_t) \\
  \end{array}
\right) \quad \hbox{is approximated by } \quad \bf{P}(t)=
\left(
  \begin{array}{cc}
    P_{11}(t) & P_{12}(t) \\
    P_{12}(t)^\top & P_{22}(t) \\
  \end{array}
\right)
\end{equation}
solution of the differential equation
\begin{equation}
\dot{\bf{P}} = \left( \begin{array}{cc} A & B\K \\ \L C & A+B\K-\L C \\ \end{array} \right)\bf{P}
+ \bf{P} \left( \begin{array}{cc} A^\top & (\L C)^\top \\ (B\K)^\top & (A+B\K-\L C)^\top \\ \end{array} \right)
+ \left( \begin{array}{cc} G & 0 \\ 0 & \L H \\ \end{array} \right)  \left( \begin{array}{cc} G^\top & 0 \\ 0 & (\L H)^\top  \\ \end{array} \right),
\end{equation}
with $P_{11}(0)=\mathrm{cov}(\xi)$, $P_{12}(0)=P_{22}(0)=0$. Using the definition of $\L=\S C^\top (HH^\top)^{-1}$, we can check directly by replacing in the equation above that $\bf{P}$ satisfies
\begin{equation}\label{eq:Pformula}
P_{11}(t)=\P(t) + \S(t) \quad \hbox{and} \quad P_{12}(t)=P_{22}(t) = \P(t), \quad \forall t \in [0,T],
\end{equation}
where $\S(\cdot)$ is the solution of the Riccati equation of the linearized Kalman filter defined by~\eqref{eq:ricc_sigma}, and $\P(\cdot)$ is the solution of the following differential equation,
\begin{equation}\label{eq:P}
\dot \P = (A+B\K)\P + \P (A+B\K)^\top + \S C^\top (HH^\top)^{-1} C \S, \qquad \P(0)=0.
\end{equation}

\begin{rem}\label{re:lien_KF}
Formula~\eqref{eq:Pformula} shows that the statistical linearization of $\bf{x}=(x,\hat{x})$ captures the properties of the Kalman filter. Indeed, for the standard Kalman-Bucy filter in the linear case, there holds $\mathrm{cov}(\hat{x}_t - x_t) = \S(t)$ (error estimation covariance) and $\Exp[(\hat{x}_t - x_t)\hat{x}_t^\top] = 0$ (projection theorem). In our framework, $\Exp[(\hat{x}_t - x_t)\hat{x}_t^\top]$ is approximated by $P_{22}-P_{12}$, so $P_{22}=P_{12}$ reflects the projection theorem. And $\mathrm{cov}(\hat{x}_t - x_t)$ is approximated by $P_{22}+ P_{11} - P_{12}-P_{12}^\top= P_{11} - P_{22}$, so $P_{11} = P_{22} + \S$ reflects that $\S$ is the error estimation covariance.
\end{rem}\smallskip

Replacing the mean and covariance of $x_t$ and $\hat{x}_t$ by their statistical linearization approximations in the expression of $\mathcal{J}(u^\o,\K)$, we obtain the following approximation of the latter,
\begin{align}
\mathcal{J}_{app}(u^\o,\K) & = m^\o(T)^\top Q_\psi m^\o(T)  + q_\psi^\top m^\o(T) + \mathrm{tr} \left( Q_\psi (\P(T)+\S(T)) \right)
   \\
 & \qquad \quad + \int_0^T \left( (u^\o)^\top R u^\o + \mathrm{tr} \left(\K^\top R \K \P \right) + (m^\o)^\top Q m^\o + \mathrm{tr} \left( Q (\P+\S))\right) + q^\top m^\o + r^\top u^\o  \right) dt \\
 & = J(u^\o) + \Delta J(u^\o,\K),
\end{align}
where $J(u^\o)= \psi(m^\o(T)) + \int_0^{T} \ell(t,m^\o,u^\o) \, dt$ and
\begin{equation}
\Delta J(u^\o,\K) = \mathrm{tr} \left( Q_\psi (\P(T)+\S(T)) \right) + \int_0^T \left( \mathrm{tr} \left(\K^\top R \K \P \right) + \mathrm{tr} \left( Q (\P+\S) \right) \right) dt.
\end{equation}

Let us reinterpret the term $\Delta J(u^\o,\K)$. Given an open-loop control $u^\o (\cdot) \in \mathcal{L}^2$, let us introduce the approximated quadratic cost
\begin{equation}
\mathbb{J}^{\lq}(\uu) = \Exp [J^{\lq}(\uu)], \qquad \hbox{with} \quad  J^{\lq}(\uu)= \x_T^\top Q_\psi \x_T  + \int_0^{T} \left( \uu^\top R(t) \uu + \x^\top Q(t) \x \right) dt,
\end{equation}
where $\x_t$ is the solution of the linearized system around $(m^\o(\cdot),u^\o(\cdot))$ associated with $\uu$, namely,
\begin{equation}
\left\{
  \begin{array}{l}
        d \x_t = \left( A(t) \x_t + B(t) \uu_t \right) dt + G(t) dw_t, \quad \x_0 \sim \mathcal{N} (0, \mathrm{cov}(\xi)), \\
    d \y_t = C(t) \x_t dt + H(t) dv_t, \quad \y_0=0.
  \end{array}
\right.
\end{equation}
From classical LQG theory (see \cite{Davis1977} for example), the minimum of $\mathbb{J}^{\lq}$ is attained at the control $\uu = \KK  \hx$, where:
\begin{itemize}
  \item the matrix-valued function $\KK(\cdot)$ (which implicitly depends on $u^\o$) is defined by
\begin{align}
  & \KK(t) = R(t)^{-1} B(t)^\top \E(t) \qquad \hbox{with $\E(\cdot)$ the solution of} \label{eq:LQgain} \\
  & \dot \E = - A^\top \E - \E A - \E BR^{-1}B^\top \E + Q, \quad \E(T) = -Q_\psi ; \label{eq:LQRicc}
\end{align}
  \item $\hx_t$ is the solution of the Kalman-Bucy filter,
\begin{equation}\label{eq:KB}
d \hx_t = \left( A(t) \hx_t + B(t) \uu_t \right) dt + \L(t) \left( d\y_t - C(t) \x_t dt \right), \quad \hx_0=0 ,
\end{equation}
the matrices $\L(t)$ being defined by equations~\eqref{eq:LKFgain}.
\end{itemize}

Now, it follows from the properties of the Kalman-Bucy filter (see Remark~\ref{re:lien_KF}) that $\Delta J(u^\o,\K)= \mathbb{J}^{\lq} (\K \hx)$. Since the minimal value of $\mathbb{J}^{\lq}$ is $\mathbb{J}^{\lq}(\KK  \hx)$, we have
\begin{equation}
  \min_{\K  \in \mathcal{L}_M^2} \Delta J(u^\o,\K) = \Delta J(u^\o,\KK) \quad \hbox{and} \quad \JJ(u^\o) := \min_{\K  \in  \mathcal{L}_M^2} \mathcal{J}_{app}(u^\o, \K) = \mathcal{J}_{app}(u^\o, \KK).
\end{equation}
As a consequence, through statistical linearization of  $ \bf{x} = (x,\hat{x}) $,  the minimal value $\min_{u \in \mathcal{U}} \mathbb{J}(u)$ is approximated by  $\min_{u^\o  \in \mathcal{L}^2} \JJ (u^\o)$.

Finally, we can remove $\P$ from the expression of $\JJ(u^\o)$ as follows. We first expand the derivative $\frac{d}{dt}(\E \P)$ by using the expressions of $\dot \P$ and $\dot \E$. Then, taking into account the definition of $\KK$ and the properties of the trace operator, we obtain
\begin{equation}
\frac{d}{dt} \mathrm{tr} (\E \P) =  \mathrm{tr} \left({\KK}^\top R \KK \P + Q  \P + \S C^\top (HH^\top)^{-1} C \S \E \right) .
\end{equation}
On the other hand, from the initial conditions of $\E$ and $\P$ we deduce $\mathrm{tr}( Q_\psi \P(T)) =- \int_0^T \frac{d}{dt} \mathrm{tr} (\E \P) \,dt$. Inserting this term in the expression of $\Delta J(u^\o,\KK)$, we obtain
\begin{equation}\label{eq:deterministic+LQG}
 \JJ(u^\o) = J(u^\o) + \mathrm{tr} ( Q_\psi \S(T)) + \int_0^T \left( \mathrm{tr} \left(  Q \S - \S C^\top (HH^\top)^{-1} C \S \E )\right) \right) dt.
\end{equation}

\begin{rem}
In this expression, $\JJ(u^\o)$ appears as the sum of the cost $J(u^\o)$ of the deterministic optimal problem and the optimal cost of the LQG problem defined by linearization around $(m^\o,u^\o)$ (see \cite[Th.~5.3.3]{Davis1977}\footnote{There is a small mistake in the formula in \cite[Th.~5.3.3]{Davis1977},  an erroneous multiplication by an extra term $(GG^\top)^{-1}$. The error comes from the expression of $\tilde{C}$ in page 179, which should be multiplied by $G$ since $\nu$ is not normalised.}). This is reminiscent of the \emph{assumed certainty equivalence design} or \emph{neighboring-optimal stochastic control} \citep{Bryson1969,Athans1971,maybeck1982stochastic,stengel1986optimal}. However, in the latter methods, the control $u^\o$ is chosen beforehand as a solution to the deterministic optimal control problem defined by the dynamics~\eqref{eq:moyenne} of the mean and the cost $J(u^\o)$. In contrast, in our method, the choice of the control $u^\o$ is made \textit{a posteriori} by minimizing the sum of $J(u^\o)$ and the LQG cost.
\end{rem} \smallskip

Thus, $\JJ(u^\o)$ writes as a function of the values of $u^\o(\cdot)$, $m^\o(\cdot)$, $\S (\cdot)$, and $\E(\cdot)$. The last two are solutions of the Riccati differential equations~\eqref{eq:LKFgain} and~\eqref{eq:LQRicc} respectively. Since the matrices $A(t)$, $B(t)$, $C(t)$, $G(t)$ and $H(t)$ appearing in these equations are function of $(m^\o(t),u^\o(t))$, equations~\eqref{eq:moyenne}, \eqref{eq:LKFgain}, and~\eqref{eq:LQRicc} write as a control system
\begin{equation}\label{eq:syst_global}
\frac{d}{dt}(m^\o,\S,\E)(t)= F(t,(m^\o,\S,\E)(t), u^\o(t)),
\end{equation}
whose control $u^\o$ belongs to $\R^m$ and whose state $(m^\o,\S,\E)$ belongs to $\R^{n+n(n+1)}$ (recall that $\S$ and $\E$ are $(n \times n)$ symmetric matrices). Hence the minimization of $\JJ(u^\o)$ is a standard deterministic optimal control problem in finite dimension.
\end{proof}

\paragraph*{Simulation details for the 1-dof stabilization task}

We consider a phenomenological model of variable impedance muscles, similarly to \cite{Hogan1984a}.
The flexor and extensor torques generated at the elbow by the biceps and triceps are given by:
\begin{equation}
\left. \begin{array}{rlll}
\textrm{Flexor torque:} & \tau_{1,t} & = & u_{1,t}(k_{n}-k_s \theta_t - k_{d} \dot{\theta}_t)\\
\textrm{Extensor torque:} & \tau_{2,t} & = & -u_{2,t}(k_{n}+k_s \theta_t + k_{d} \dot{\theta}_t)
\end{array}\right.\label{eq:torques}
\end{equation}


In this task, the state vector is $s_t=(\theta_t, \dot{\theta}_t)$ where the elements are respectively the joint angle and velocity, and the control vector corresponds to muscle activations, $u_t=(u_{1,t},u_{2,t})$, where $u_{1,t}\in [0,1]$ and $u_{2,t}\in [0,1]$ (in arbitrary units, a.u.) allow to regulate both net torque and viscoelasticity at the joint.

The system's drift writes as follows:
\begin{equation}
\tilde{f}(t,s_t,u_t)=
\left(\begin{array}{c}
\dot{\theta}_t \\
\big(\tau_{1,t} + \tau_{2,t} + k_{g}\sin(\theta_t) - k_{s_0}\theta_t - k_{d_0}\dot{\theta}_t\big)/I
\end{array}\right)
\label{eq:drift_pendulum}
\end{equation}
\noindent where $k_{n}=40\,Nm$, $k_{s}=20\,Nm/rad$ and $k_{d}=2\,Nms/rad$ are constants, $k_{g}=10.754\,Nm$
relates to gravity torque (corresponding to the forearm with a load of
2.268 kg attached at the hand), $k_{d_0}=0.1\,Nm/rad/s$ is a baseline damping factor, $k_{s_0}=1\,Nm/rad$ is a baseline stiffness factor, and
$I=0.337\,kg.m^{2}$ is the moment of inertia of the forearm with the load.
Note that the drift $\tilde{f}$ is nonlinear because of the sine function and the product terms involving the state and control components (bilinearity).
We thus assume that the stochastic state evolves according to the following Ito's SDE:
\begin{equation}
ds_t = \tilde{f}(t, s_t,u_t) \, dt + \tilde{G}(t,s_t,u_t) \, d\tilde{w}_t ,\label{eq:state_variable}
\end{equation}
\noindent with diffusion term
\begin{equation}
\tilde{G}(t,s_t,u_t) = \left(\begin{array}{c}
0 \\
\sigma
\end{array}\right)\label{eq:diffusion_pendulum}
\end{equation}
\noindent where $\tilde{w}$ a scalar Wiener process and $\sigma=0.1$ rad/$s^{2}/\sqrt{s}$ is the motor noise magnitude.

In this model, it is worth noting that $u_{1}-u_{2}$ controls the net joint torque (through the gain $k_n$), whereas $u_{1}+u_{2}$ controls the joint stiffness (through the gain $k_s$) and the joint damping (through the gain $k_d$). Hence the joint's viscoelasticity is proportional to muscle co-contraction, here defined as $u_{1}+u_{2}$.

To model the noisy and delayed sensory integration, we introduce the auxiliary variables $z^p_t$ and $z^v_t$ as follows:
\begin{align}
dz^{p}_{t} &= (\theta_t-z^{p}_{t})/\rho_p \, dt + \gamma_p \, dw^{p}_t \\
dz^{v}_{t} &= (\theta_t-z^{v}_{t})/\rho_v \, dt + \gamma_v \, dw^{v}_t \label{eq:delayed_variable}
\end{align}
\noindent where $w^p$ and $w^v$ denote scalar Wiener processes with noise coefficients $\gamma_p=\gamma_v=0.01$ rad/$\sqrt{s}$ (internal neural noise). Hence $z_{p,t}$ and $z_{v,t}$ are noisy and delayed versions of $\theta_t$ with delay parameters $\rho_p$ and $\rho_v$  (by default set to $0.05\,s$ and $0.1\,s$ respectively).

We then define the augmented state $x_t = (\theta_t, \dot{\theta}_t, z^{p}_{t}, z^{v}_{t})$ and the augmented drift writes as follows:
\begin{equation}
dx_t =\drift(x_t, u_t, t) \, dt + \diff(x_t, u_t, t) \, dw_t ,\label{eq:augmented_variable}
\end{equation}
\noindent where $w=(\tilde{w}, w^p, w^v)$ denotes a standard 3D Wiener process.
The functions $f$ and $G$ are defined as:
\begin{equation}
\drift(t,x_t, u_t) = \left(\begin{array}{c}
\tilde{f}(t,s_t, u_t)\\
(\theta_t - z^{p}_{t})/\rho_p \\
(\theta_t - z^{v}_{t})/\rho_v \\
\end{array}\right) ,\label{eq:aug_f} \quad
\diff(t,x_t, u_t) = \left(\begin{array}{ccc}
0 & 0 & 0\\
\sigma & 0 & 0\\
0 & \gamma_p & 0 \\
0 & 0 & \gamma_v
\end{array}\right).
\end{equation}

As a result, the stochastic system writes as in \eqref{eq:sde}:
\begin{equation}
dx_{t}=\drift(t,x_{t},u_t)\,dt+\diff(t,x_{t},u_t)\,dw_{t},\label{eq:nonlinear-stochastic}
\end{equation}
\noindent with the initial state $x_0 \sim \mathcal{N}(0,\P_0)$ with $\P_0=10^{-6}I_{4\times 4}$.

For the output equation, we assume that we observe the variables $z^{p}_{t}$ and $z^{v}_{t}$ as follows:
\begin{equation}
dy_{t}= \left(\begin{array}{c}
z^{p}_{t} \\
z^{v}_{t}
\end{array}\right) \, dt + \left(\begin{array}{cc}
\eta_p & 0 \\
0 & \eta_v
\end{array}\right) \, dv_{t}.\label{eq:output-stochastic-delayed}
\end{equation}
\noindent where $\eta_p$ and $\eta_v$ are the sensory noise coefficients for proprioception and vision respectively (in rad$\sqrt{s}$). By default, we set to $\eta_p$ = 5 $^\circ\sqrt{s}$ for the proprioceptive noise standard deviation, $\eta_v$ = 1 $^\circ\sqrt{s}$ for the visual noise standard deviation, and $v$ is a standard 2D Wiener process.

This leads to a partial state observation as in \eqref{eq:sde_obs}:
\begin{equation}
dy_{t}= \odrift(t,x_{t}) \, dt + \odiff(t,x_t) \,dv_{t},\label{eq:output-stochastic}
\end{equation}
\noindent where the functions are defined as follows:
\begin{equation}
\odrift(t,x_t)=\left(\begin{array}{c}
z^{p}_{t} \\
z^{v}_{t}
\end{array}\right), \quad \odiff(t,x_t)=\left(\begin{array}{cc}
\eta_p & 0 \\
0 & \eta_v
\end{array}\right).
\end{equation}

To simulate the stabilization of the inverted forearm with minimal squared muscle activations and squared task errors, the following cost function is used:
\begin{equation}
\mathbb{J}(u)=\Exp \left[\int_{0}^{T}\big(u_t^{\top}R u_t+x_{t}^{\top}Q x_{t}\big)\,dt\right]\label{eq:cost_pendulum}
\end{equation}
\noindent where $R=I_{2\times 2}$, an identity matrix, $Q=\textrm{diag}(10,1, 0)$, $Q_\psi=0_{3\times3}$, and $T=5\,s$. Remind that the class of admissible controls are the controls that write $u_t = u^\o(t) + \K(t) \left( \hat{x}_t - m^\o(t)\right), \, t \in [0,T]$, which is the form used in simulations to sample single trajectories.

\paragraph*{Simulation details for the 2-dof reaching task}

The state of the rigid body dynamics is described by joint positions and velocities $(q, \dot{q}) =(q_{1}, q_{2}, \dot{q}_{1},\dot{q}_{2})$ (indices $1$ and $2$ stand for the shoulder and elbow respectively) and 6 muscles are assumed to actuate the arm.

The nonlinear rigid body dynamics of the arm are given by:
\begin{equation} \label{eq:RBD}
    \ddot{q}=\mathcal{M}^{-1}(q)\big(\tau_{m}(q,\dot{q},a) + \tau_{e}(q) -\mathcal{C}(q,\dot{q})\dot{q}\big)
\end{equation}
where $\mathcal{M}$ is the $2\times2$ inertia matrix, $\mathcal{C}$ is the $2\times2$ Coriolis/centripetal matrix, $\tau_m$ is the joint torque produced by muscles, $\tau_e$ is an external torque produced by the environment (here set to $\tau_e=\mathrm{J}^\top \beta \mathrm{x}$ where $\mathrm{J}$ is the Jacobian matrix of the 2-dof arm at the endpoint, $\mathrm{x}$ is the lateral coordinate of the endpoint and $\beta$ is the external force field magnitude in N/m), and $a\,\in\,[0,1]^{6}$ is the vector of muscle activations.

The muscle activation dynamics are modeled as first-order low-pass filters as follows:
\begin{equation} \label{eq:act_dyn}
    \dot{a} = \Gamma^{-1}(u-a)
\end{equation}
where $u \in [0,1]^6$ is the vector of muscle inputs (the control variable), $\Gamma$ is a diagonal matrix whose non-null elements are equal to $\gamma=0.06\,s$, the muscle response times (assumed to be constant here).

The muscle activations generate muscle forces according to the following equation:
\begin{equation}
f_m(l,\dot{l},a) = K(a)\big(l_{r}(a)-l\big)-B(a)\dot{l}
\end{equation}
where $K(a)=\textrm{diag}(k_{0}+k\,a)$, $B(a)=\textrm{diag}(b_{0}+b\,a)$ and $l_{r}(a)=l_{0}+\text{diag}(r_{1},...,r_{6})\,a$ are the activation-dependent muscle stiffness, muscle viscosity and muscle resting length, respectively.

The relationship between muscle forces and muscle torques produced is given using the moment arm matrix, as follows:
\begin{equation}
    \tau_m(q,\dot{q},a)=-M{}^{\top}f_m(l,\dot{l},a)
\end{equation}
where $M$ is the moment arm matrix (assumed to be constant here), $l=l_{m}-Mq$ is the muscle length vector ($l_{m}$ being the muscle length when the joint angle is zero), and $\dot{l}=-M\dot{q}$.

The constant matrix $M$ has the following shape related to the function of each muscle:
\begin{equation}
    M^\top=\left(\begin{array}{cccccc}
    m_{1} & m_{2} & 0 & 0 & m_{5} & m_{6}\\
    0 & 0 & m_{3} & m_{4} & m_{7} & m_{8}
\end{array}\right).
\end{equation}

Muscle parameters are adopted from \cite{Katayama1993} (see their Tables 1, 2, and 3), with the exception of $k, k_0, b,$ and $b_0$, which are multiplied by 10 to simulate stronger muscles capable of resisting the divergent force field.

The dynamics of the neuromechanical system is thus described by a 10-dimensional vector, namely $s=(q, \dot{q}, a)$. To account for sensory information, we assume that delayed joint positions are observed. To this aim, we add the auxiliary variable $z=(z_1, z_2)$ with dynamics
\begin{equation} \label{eq:aux_dyn}
    \dot{z} = \Gamma^{-1}(q-z)
\end{equation}
where $\Gamma = \mathrm{diag}(\rho, \rho)$ with $\rho=0.10\,s$.

Therefore, the augmented state of the system incorporating sensory feedback delays is 12-dimensional and defined as $x_t=(s_t,z_t)=(q_t, \dot{q}_t, a_t, z_t)$ and the dynamics $f(t,x_t,u_t)=(\dot{q}_t, \ddot{q}_t, \dot{a}_t, \dot{z}_t)$ can be written by merging \eqref{eq:RBD}, \eqref{eq:act_dyn}, \eqref{eq:aux_dyn}. For the observation, the function $g$ is simply defined as $g(t,x_t)=z_t$

Next, we model the diffusion terms to describe the effects of sensorimotor noise on the system.
For the motor noise, we assume that a 6-dimensional Wiener process affects the state dynamics through the following diffusion term:
\begin{equation}
    G(t, x_t, u_t)= \left( \begin{array}{cc}
       0_{4\times 6} & 0_{4\times2} \\
       \mathrm{diag}(\sigma_a + \sigma_m \, a_t) & 0_{6\times2} \\
        0_{2\times 6} & \mathrm{diag}(\omega,\omega) \\
    \end{array}
    \right)
\end{equation}
where $\sigma_a=0.025\,\mathrm{a.u.}/\sqrt{s}$, $\sigma_m=0.005\,\mathrm{1}/\sqrt{s}$, $\omega=0.01\,\mathrm{rad}/\sqrt{s}$ are the noise magnitudes. The affine motor noise (including additive and multiplicative components) acts in the space of muscle activations. The noise related to $\omega$ represents internal noise associated with sensory integration.

For the sensory noise, we consider a 2-dimensional Wiener process that affects the observation dynamics through the diffusion term
\begin{equation}
    H(t, x_t)= \left( \begin{array}{cc}
       \eta & 0 \\
       0 & \eta
    \end{array}
    \right),
\end{equation}
where $\eta \in \{0.05, 0.5\}$ $^\circ\sqrt{s}$ (the second value is used to simulate reaching without on-line visual feedback, hence with increased observation noise \cite{Berret2021}).


For this planar reaching task, the cost function is defined as:
\begin{equation}
\mathbb{J}(u) =\mathbb{E}\left[ [x_T-x_r(
T)]^{\top} Q_\phi [x_T-x_r(T)] + \int_{0}^{T} \big( [x_t-x_r(t)]^{\top} Q [x_t-x_r(t)] + u_t^\top R u_t  \big) \, dt\right]
\end{equation}
where $x_r(t)$ for $t\in[0,T]$ implements a minimum hand jerk trajectory \cite{Flash1985} for the joint position and velocities (zeros are appended for $a_t$ and $z_t$ components), $Q = \mathrm{diag}(1\mathrm{e}4, 1\mathrm{e}4, 1\mathrm{e}2, 1\mathrm{e}2, 1_{8\times} )$, $Q_\psi = \mathrm{diag}(1\mathrm{e}4, 1\mathrm{e}4, 1\mathrm{e}2, 1\mathrm{e}2, 0_{8\times} )$ and $R=I_{6\times 6}$. Note that a trajectory tracking task is implemented here for simplicity but the framework does not require a reference trajectory in general (alternatively, a hand jerk cost could be added on the mean trajectory as in \cite{Berret2020b} but this would introduce an additional weighting coefficient).

To define the task completely, the mean initial and final joint positions and velocities are prescribed as hard constraints as in \cite{Berret2020b}. We also set the mean initial muscle activations, which are determined by minimizing the sum of squared activations required to maintain the initial arm posture at zero net torque. The mean initial and final values of $z$ are the same as those of the joint position vector. The initial covariance is set to $\P_0=10^{-6}I_{12\times 12}$. The mean final muscle activations are left free. Movement duration is set to $T=0.75\,s$.

Single trajectories are sampled using the control policy $u_t = u^\o(t) + \K(t) \left( \hat{x}_t - m^\o(t)\right), \, t \in [0,T]$ during forward simulations.

\paragraph*{Numerical resolution}
To numerically solve the approximately equivalent deterministic optimal control problems, we have implemented a direct transcription method using an implicit Euler scheme with a 2-ms time step. The resulting problem has been solved with Julia, using a combination of Symbolics.jl \citep{symbolics.jl}, JuMP.jl \citep{jump.jl} and MadNLP.jl \citep{madnlp.jl2} packages. The codes used to perform the simulations will be made available and attached to the paper.

\section*{Acknowledgments}
This work was supported by the ANR grant ANR-25-NEUC-0001 (NEUROPT) within the CRCNS US-French Research Program. The authors also thank Etienne Burdet for his insightful comments on earlier versions of this manuscript.

\nolinenumbers
\bibliographystyle{naturemag}
\bibliography{BBbib_clean}

@article{FlemingPardoux1982,
	title        = {Optimal control for partially observable diffusions},
	author       = {Wendell H. Fleming and {\'E}tienne Pardoux},
	year         = 1982,
	journal      = {SIAM Journal on Control and Optimization},
	volume       = 20,
	number       = 2,
	pages        = {261--285}
}

@article{Zakai1969,
	title        = {On the optimal filtering of diffusion processes},
	author       = {Moshe Zakai},
	year         = 1969,
	journal      = {Zeitschrift f{\"u}r Wahrscheinlichkeitstheorie und Verwandte Gebiete},
	volume       = 11,
	number       = 3,
	pages        = {230--243}
}

@book{JacobsonMayne1970,
	title        = {Differential Dynamic Programming},
	author       = {David H. Jacobson and David Q. Mayne},
	year         = 1970,
	publisher    = {Elsevier},
	address      = {New York}
}

@inproceedings{Platt2010,
	title        = {Belief space planning assuming maximum likelihood observations},
	author       = {R. Platt AND R. Tedrake AND L. Kaelbling AND T. Lozano-Perez},
	year         = 2010,
	month        = {June},
	booktitle    = {Proceedings of Robotics: Science and Systems},
	address      = {Zaragoza, Spain}
}

@article{VandenBerg2012,
	title        = {Motion planning under uncertainty using iterative local optimization in belief space},
	author       = {Jur van~den~Berg and Sachin Patil and Ron Alterovitz},
	year         = 2012,
	journal      = {Int. J. Robotics Res.},
	volume       = 31,
	number       = 11,
	pages        = {1263--1278}
}

@inproceedings{FanMing2021,
	title        = {Model-based reinforcement learning for continuous control with posterior sampling},
	author       = {Yifan Fan and Yi Ming},
	year         = 2021,
	booktitle    = {Proceedings of the 38th International Conference on Machine Learning (ICML)},
	publisher    = {PMLR},
	pages        = {3078--3087}
}

@inproceedings{Wang2023,
	title        = {Learning belief representations for partially observable deep reinforcement learning},
	author       = {Annie Wang and Alex C. Li and Thomas Q. Klassen and Rodrigo Toro Icarte and Sheila A. McIlraith},
	year         = 2023,
	booktitle    = {Proceedings of the Conference on Machine Learning Research (PMLR)},
	publisher    = {PMLR}
}

@inproceedings{Okada2020,
  title = {PlaNet of the Bayesians: Reconsidering and Improving Deep Planning Network by Incorporating Bayesian Inference},
  booktitle = {2020 IEEE/RSJ International Conference on Intelligent Robots and Systems (IROS)},
  publisher = {IEEE},
  author = {Okada,  Masashi and Kosaka,  Norio and Taniguchi,  Tadahiro},
  year = {2020},
  month = oct,
  pages = {5611–5618}
}

@article{Mesbah2018,
	title        = {Stochastic model predictive control with active uncertainty learning: A Survey on dual control},
	author       = {Ali Mesbah},
	year         = 2018,
	journal      = {Annual Reviews in Control},
	volume       = 45,
	pages        = {107--117},
	issn         = {1367-5788}
}

@book{betts2020,
	title        = {Practical methods for optimal control using nonlinear programming},
	author       = {Betts, John T.},
	year         = 2020,
	booktitle    = {Practical methods for optimal control using nonlinear programming},
	publisher    = {Society for Industrial and Applied Mathematics},
	address      = {Philadelphia},
	series       = {Advances in design and control ; 36},
	isbn         = 9781611976199,
	edition      = {Third edition.},
	language     = {eng},
	lccn         = 2020004553
}

@article{Williams2017,
	title        = {Model Predictive Path Integral Control: From Theory to Parallel Computation},
	author       = {Williams,  Grady and Aldrich,  Andrew and Theodorou,  Evangelos A.},
	year         = 2017,
	month        = feb,
	journal      = {Journal of Guidance,  Control,  and Dynamics},
	publisher    = {American Institute of Aeronautics and Astronautics (AIAA)},
	volume       = 40,
	number       = 2,
	pages        = {344–357},
	issn         = {1533-3884}
}

@article{Ernst2004,
	title        = {Merging the senses into a robust percept},
	author       = {Ernst,  Marc O. and B\"{u}lthoff,  Heinrich H.},
	year         = 2004,
	month        = apr,
	journal      = {Trends in Cognitive Sciences},
	publisher    = {Elsevier BV},
	volume       = 8,
	number       = 4,
	pages        = {162–169},
	issn         = {1364-6613}
}

@article{Voloshina2013,
	title        = {Biomechanics and energetics of walking on uneven terrain},
	author       = {Voloshina,  Alexandra S. and Kuo,  Arthur D. and Daley,  Monica A and Ferris,  Daniel P.},
	year         = 2013,
	month        = jan,
	journal      = {Journal of Experimental Biology},
	publisher    = {The Company of Biologists},
	issn         = {0022-0949}
}

@article{jump.jl,
	title        = {{JuMP} 1.0: {R}ecent improvements to a modeling language for mathematical optimization},
	author       = {Miles Lubin and Oscar Dowson and Joaquim {Dias Garcia} and Joey Huchette and Beno{\^i}t Legat and Juan Pablo Vielma},
	year         = 2023,
	journal      = {Mathematical Programming Computation},
	volume       = 15,
	pages        = {581–589}
}

@article{symbolics.jl,
	title        = {High-Performance Symbolic-Numerics via Multiple Dispatch},
	author       = {Gowda, Shashi and Ma, Yingbo and Cheli, Alessandro and Gw\'{o}\'{z}zd\'{z}, Maja and Shah, Viral B. and Edelman, Alan and Rackauckas, Christopher},
	year         = 2022,
	month        = {jan},
	journal      = {ACM Commun. Comput. Algebra},
	publisher    = {Association for Computing Machinery},
	address      = {New York, NY, USA},
	volume       = 55,
	number       = 3,
	pages        = {92–96},
	issn         = {1932-2240},
	issue_date   = {September 2021},
	numpages     = 5
}

@article{VanWouwe2022,
	title        = {An approximate stochastic optimal control framework to simulate nonlinear neuro-musculoskeletal models in the presence of noise},
	author       = {Van Wouwe, Tom and Ting, Lena H. and De Groote, Friedl},
	year         = 2022,
	month        = jun,
	journal      = {PLOS Computational Biology},
	publisher    = {Public Library of Science (PLoS)},
	volume       = 18,
	number       = 6,
	pages        = {e1009338},
	issn         = {1553-7358},
	editor       = {Haith, Adrian M.}
}

@article{Berret2024,
	title        = {Co-contraction embodies uncertainty: An optimal feedforward strategy for robust motor control},
	author       = {Berret,  Bastien and Verdel,  Dorian and Burdet,  Etienne and Jean,  Fr{\'e}d{\'e}ric},
	year         = 2024,
	month        = nov,
	journal      = {PLOS Computational Biology},
	publisher    = {Public Library of Science (PLoS)},
	volume       = 20,
	number       = 11,
	pages        = {e1012598},
	issn         = {1553-7358},
	editor       = {Haith,  Adrian M}
}

@article{Berret2020b,
	title        = {Stochastic optimal open-loop control as a theory of force and impedance planning via muscle co-contraction},
	author       = {Bastien Berret and Fr{\'{e}}d{\'{e}}ric Jean},
	year         = 2020,
	month        = feb,
	journal      = {{PLOS} Computational Biology},
	publisher    = {Public Library of Science ({PLoS})},
	volume       = 16,
	number       = 2,
	pages        = {e1007414},
	editor       = {Adrian M Haith}
}

@article{Berret2021,
	title        = {Stochastic optimal feedforward-feedback control determines timing and variability of arm movements with or without vision},
	author       = {Bastien Berret and Adrien Conessa and Nicolas Schweighofer and Etienne Burdet},
	year         = 2021,
	month        = jun,
	journal      = {{PLOS} Computational Biology},
	publisher    = {Public Library of Science ({PLoS})},
	volume       = 17,
	number       = 6,
	pages        = {e1009047},
	editor       = {Ulrik R. Beierholm}
}

@article{Athans1971,
	title        = {The Role and Use of the Stochastic Linear-Quadratic-Gaussian Problem in Control System Design},
	author       = {M. Athans},
	year         = 1971,
	journal      = {IEEE Trans. Autom. Control},
	volume       = 16,
	number       = 6,
	pages        = {529--552}
}

@article{Berret2011b,
	title        = {Evidence for composite cost functions in arm movement planning: an inverse optimal control approach.},
	author       = {Berret, Bastien and Chiovetto, Enrico and Nori, Francesco and Pozzo, Thierry},
	year         = 2011,
	month        = oct,
	journal      = {PLOS Computational Biology},
	volume       = 7,
	number       = 10,
	pages        = {e1002183},
	language     = {eng},
	medline-pst  = {ppublish},
	pii          = {PCOMPBIOL-D-11-00355},
	pmid         = 22022242,
	school       = {Italian Institute of Technology, Department of Robotics, Brain and Cognitive Sciences, Genoa, Italy. bastien.berret\@iit.it}
}

@article{Berret2016,
	title        = {Why Don't We Move Slower? The Value of Time in the Neural Control of Action.},
	author       = {Berret, Bastien and Jean, Fr{\'{e}}d{\'{e}}ric},
	year         = 2016,
	month        = jan,
	journal      = {J. Neurosci.},
	volume       = 36,
	number       = 4,
	pages        = {1056--1070},
	language     = {eng},
	medline-pst  = {ppublish},
	pii          = {36/4/1056},
	pmid         = 26818497,
	school       = {Unit� de Math�matiques Appliqu�es (UMA), Ecole Nationale Sup�rieure de Techniques Avanc�es, ParisTech, Universit� Paris-Saclay, F-91120 Palaiseau, France.}
}

@article{Burdet2001,
	title        = {The central nervous system stabilizes unstable dynamics by learning optimal impedance.},
	author       = {E. Burdet and R. Osu and D. W. Franklin and T. E. Milner and M. Kawato},
	year         = 2001,
	month        = nov,
	journal      = {Nature},
	volume       = 414,
	number       = 6862,
	pages        = {446--449},
	language     = {eng},
	medline-pst  = {ppublish},
	pii          = 35106566,
	pmid         = 11719805,
	school       = {Department of Mechanical Engineering, National University of Singapore, 119260, Singapore.}
}

@article{Flash1985,
	title        = {The coordination of arm movements: an experimentally confirmed mathematical model.},
	author       = {T. Flash and N. Hogan},
	year         = 1985,
	month        = jul,
	journal      = {J. Neurosci.},
	volume       = 5,
	number       = 7,
	pages        = {1688--1703},
	pmid         = 4020415
}

@article{Harris1998,
	title        = {Signal-dependent noise determines motor planning.},
	author       = {C. M. Harris and D. M. Wolpert},
	year         = 1998,
	month        = aug,
	journal      = {Nature},
	volume       = 394,
	number       = 6695,
	pages        = {780--784},
	pmid         = 9723616
}

@article{Leib2023,
	title        = {Behavioral Motor Performance},
	author       = {Leib,  Raz and Howard,  Ian S. and Millard,  Matthew and Franklin,  David W.},
	year         = 2023,
	month        = dec,
	journal      = {Comprehensive Physiology},
	publisher    = {Wiley},
	volume       = 14,
	number       = 1,
	pages        = {5179–5224},
	issn         = {2040-4603}
}

@article{Li2007,
	title        = {Iterative linearization methods for approximately optimal control and estimation of non-linear stochastic system},
	author       = {W. Li and E. Todorov},
	year         = 2007,
	journal      = {Int. J. Control},
	volume       = 80,
	number       = 9,
	pages        = {1439--1453}
}

@book{stengel1986optimal,
	title        = {Optimal Control and Estimation},
	author       = {Stengel, R.F.},
	year         = 1986,
	publisher    = {Dover Publications},
	series       = {Dover books on advanced mathematics},
	isbn         = 9780486682006,
	lccn         = {lc94020406}
}

@article{Todorov2009,
	title        = {Efficient computation of optimal actions.},
	author       = {E. Todorov},
	year         = 2009,
	month        = jul,
	journal      = {Proc. Natl. Acad. Sci. U.S.A.},
	volume       = 106,
	number       = 28,
	pages        = {11478--11483},
	language     = {eng},
	medline-pst  = {ppublish},
	pii          = {0710743106},
	pmid         = 19574462,
	school       = {Engineering, University of Washington, Seattle, WA 98195, USA.}
}

@article{Todorov2002,
	title        = {Optimal feedback control as a theory of motor coordination.},
	author       = {E. Todorov and M. I. Jordan},
	year         = 2002,
	month        = nov,
	journal      = {Nat. Neurosci.},
	volume       = 5,
	number       = 11,
	pages        = {1226--1235},
	pii          = {nn963},
	pmid         = 12404008
}

@inproceedings{Todorov2005b,
	title        = {A generalized iterative LQG method for locally-optimal feedback control of constrained nonlinear stochastic systems},
	author       = {Todorov, E. and Weiwei Li},
	year         = 2005,
	month        = jun,
	booktitle    = {Proceedings of the 2005 American Control Conference},
	pages        = {300--306 vol. 1},
	issn         = {0743-1619}
}

@article{Wolpert1995c,
	title        = {An internal model for sensorimotor integration.},
	author       = {D. M. Wolpert and Z. Ghahramani and M. I. Jordan},
	year         = 1995,
	month        = sep,
	journal      = {Science},
	volume       = 269,
	number       = 5232,
	pages        = {1880--1882},
	pmid         = 7569931
}

@article{Hogan1984a,
	title        = {Adaptive control of mechanical impedance by coactivation of antagonist muscles},
	author       = {Hogan, Neville},
	year         = 1984,
	journal      = {IEEE Trans. Autom. Control},
	publisher    = {IEEE},
	volume       = 29,
	number       = 8,
	pages        = {681--690}
}

@article{Katayama1993,
	title        = {Virtual trajectory and stiffness ellipse during multijoint arm movement predicted by neural inverse models.},
	author       = {Katayama, M and Kawato, M},
	year         = 1993,
	journal      = {Biol. Cybern.},
	volume       = 69,
	pages        = {353--362},
	issn         = {0340-1200},
	citation-subset = {IM, S},
	completed    = {1994-02-08},
	country      = {Germany},
	issn-linking = {0340-1200},
	issue        = {5-6},
	nlm-id       = 7502533,
	pmid         = 8274536,
	pubmodel     = {Print},
	pubstatus    = {ppublish},
	revised      = {2017-08-26}
}

@book{maybeck1982stochastic,
	title        = {Stochastic models, estimation, and control},
	author       = {Maybeck, Peter S},
	year         = 1982,
	publisher    = {Academic press},
	volume       = 3
}

@article{Franklin2003,
	title        = {Adaptation to stable and unstable dynamics achieved by combined impedance control and inverse dynamics model.},
	author       = {Franklin, David W and Osu, Rieko and Burdet, Etienne and Kawato, Mitsuo and Milner, Theodore E},
	year         = 2003,
	month        = nov,
	journal      = {J. Neurophysiol.},
	volume       = 90,
	pages        = {3270--3282},
	issn         = {0022-3077},
	citation-subset = {IM},
	completed    = {2004-01-22},
	country      = {United States},
	issn-linking = {0022-3077},
	issue        = 5,
	nlm-id       = {0375404},
	pii          = {90/5/3270},
	pmid         = 14615432,
	pubmodel     = {Print},
	pubstatus    = {ppublish},
	revised      = {2006-11-15}
}

@article{Scott2012,
	title        = {The computational and neural basis of voluntary motor control and planning.},
	author       = {Scott, Stephen H},
	year         = 2012,
	month        = nov,
	journal      = {Trends in cognitive sciences},
	volume       = 16,
	pages        = {541--549},
	issn         = {1879-307X},
	citation-subset = {IM},
	completed    = {2013-03-25},
	country      = {England},
	issn-linking = {1364-6613},
	issue        = 11,
	nlm-id       = 9708669,
	pii          = {S1364-6613(12)00224-0},
	pmid         = 23031541,
	pubmodel     = {Print-Electronic},
	pubstatus    = {ppublish},
	revised      = {2012-10-26}
}

@inproceedings{Vanderborght2012,
	title        = {Variable impedance actuators: Moving the robots of tomorrow},
	author       = {B. Vanderborght and A. Albu-Schaeffer and A. Bicchi and E. Burdet and D. Caldwell and R. Carloni and M. Catalano and G. Ganesh and M. Garabini and M. Grebenstein and G. Grioli and S. Haddadin and A. Jafari and M. Laffranchi and D. Lefeber and F. Petit and S. Stramigioli and N. Tsagarakis and M. Van Damme and R. Van Ham and L. C. Visser and S. Wolf},
	year         = 2012,
	month        = oct,
	booktitle    = {Proc. IEEE/RSJ Int. Conf. Intelligent Robots and Systems},
	pages        = {5454--5455},
	issn         = {2153-0866}
}

@article{Franklin2011,
	title        = {Computational mechanisms of sensorimotor control},
	author       = {Franklin, David W and Wolpert, Daniel M},
	year         = 2011,
	journal      = {Neuron},
	publisher    = {Elsevier},
	volume       = 72,
	number       = 3,
	pages        = {425--442}
}

@misc{Franklin2023,
	title        = {Visuomotor feedback tuning in the absence of visual error information},
	author       = {Franklin,  Sae and Franklin,  David W.},
	year         = 2023,
	publisher    = {arXiv},
	copyright    = {Creative Commons Attribution Non Commercial No Derivatives 4.0 International}
}

@article{Franklin2007,
	title        = {Visual feedback is not necessary for the learning of novel dynamics},
	author       = {Franklin, David W and So, Udell and Burdet, Etienne and Kawato, Mitsuo},
	year         = 2007,
	journal      = {PLoS One},
	publisher    = {Public Library of Science},
	volume       = 2,
	number       = 12,
	pages        = {e1336}
}

@article{Franklin2004,
	title        = {Impedance control balances stability with metabolically costly muscle activation},
	author       = {Franklin, David W and So, Udell and Kawato, Mitsuo and Milner, Theodore E},
	year         = 2004,
	journal      = {J. Neurophysiol.},
	publisher    = {American Physiological Society},
	volume       = 92,
	number       = 5,
	pages        = {3097--3105}
}

@book{Bryson1969,
	title        = {Applied Optimal Control},
	author       = {Bryson, A. E. and Ho, Y. C.},
	year         = 1969,
	booktitle    = {Applied Optimal Control},
	publisher    = {Blaisdell},
	address      = {New York},
	description  = {CCNLab BibTeX}
}

@article{Phan2020,
	title        = {Estimating Human Wrist Stiffness during a Tooling Task},
	author       = {Phan,  Gia-Hoang and Hansen,  Clint and Tommasino,  Paolo and Budhota,  Aamani and Mohan,  Dhanya Menoth and Hussain,  Asif and Burdet,  Etienne and Campolo,  Domenico},
	year         = 2020,
	month        = jun,
	journal      = {Sensors},
	publisher    = {MDPI AG},
	volume       = 20,
	number       = 11,
	pages        = 3260,
	issn         = {1424-8220}
}

@article{Theodorou2010,
	title        = {A generalized path integral control approach to reinforcement learning},
	author       = {Theodorou, Evangelos and Buchli, Jonas and Schaal, Stefan},
	year         = 2010,
	month        = dec,
	journal      = {J. Mach. Learn. Res.},
	publisher    = {JMLR.org},
	volume       = 11,
	pages        = {3137--3181}
}

@inproceedings{Theodorou2010a,
	title        = {Stochastic differential dynamic programming},
	author       = {Theodorou, Evangelos and Tassa, Yuval and Todorov, Emo},
	year         = 2010,
	booktitle    = {Proceedings of the 2010 American Control Conference},
	pages        = {1125--1132},
	organization = {IEEE}
}

@article{Berret2020,
	title        = {Efficient computation of optimal open-loop controls for stochastic systems},
	author       = {Bastien Berret and Fr\'{e}d\'{e}ric Jean},
	year         = 2020,
	journal      = {Automatica},
	volume       = 115,
	pages        = 108874,
	issn         = {0005-1098}
}

@article{Leparoux2024,
	title        = {Statistical linearization for robust motion planning},
	author       = {Clara Leparoux and Riccardo Bonalli and Bruno H\'{e}riss\'{e} and Fr\'{e}d\'{e}ric Jean},
	year         = 2024,
	journal      = {Systems \& Control Letters},
	volume       = 189,
	pages        = 105825,
	issn         = {0167-6911}
}

@book{Davis1977,
	title        = {Linear Estimation and Stochastic Control},
	author       = {Davis, M.H.A.},
	year         = 1977,
	publisher    = {Chapman and Hall},
	series       = {A Halsted Press Book},
	isbn         = 9780412154706,
	lccn         = 77023389
}

@book{bensoussan1992stochastic,
	title        = {Stochastic Control of Partially Observable Systems},
	author       = {Bensoussan,  Alain},
	year         = 1992,
	month        = aug,
	publisher    = {Cambridge University Press},
	isbn         = 9780511526503
}

@book{Burdet2013,
	title        = {Human Robotics: Neuromechanics and Motor Control},
	author       = {Burdet,  Etienne and Franklin,  David W. and Milner,  Theodore E.},
	year         = 2013,
	month        = sep,
	publisher    = {The MIT Press},
	isbn         = 9780262314817
}

@article{madnlp.jl2,
	title        = {Accelerating optimal power flow with {GPU}s: {SIMD} abstraction of nonlinear programs and condensed-space interior-point methods},
	author       = {Shin, Sungho and Anitescu, Mihai and Pacaud, Fran{\c{c}}ois},
	year         = 2024,
	journal      = {Electric Power Systems Research},
	publisher    = {Elsevier},
	volume       = 236,
	pages        = 110651
}

\end{document}